\newcommand{\id}{\operatorname{id}\nolimits}
\begin{document}
\title{Decomposable Extensions Between  Rank $1$  Modules in Grassmannian Cluster Categories} 

\author{Dusko Bogdanic and Ivan-Vanja Boroja}

\date{}

\def\comment#1{\textcolor[rgb]{0.69,0.0,0.0}{ \texttt{#1}}}  %
\def\vorl#1{\textcolor[rgb]{0.0,0.2,0.8}{[Lecture #1]}}

\definecolor{light-grey}{gray}{0.6}  



\newtheorem{lm}{Lemma}[section]
\newtheorem{prop}[lm]{Proposition}
\newtheorem{satz}[lm]{Satz}

\newtheorem{corollary}[lm]{Corollary}
\newtheorem{cor}[lm]{Korollar}
\newtheorem{claim}[lm]{Claim}
\newtheorem{theorem}[lm]{Theorem}
\newtheorem*{thm}{Theorem}

\theoremstyle{definition}
\newtheorem{defn}[lm]{Definition}
\newtheorem{qu}{Question}
\newtheorem{ex}[lm]{Example}
\newtheorem{exas}[lm]{Examples}
\newtheorem{exc}[lm]{Exercise}
\newtheorem*{facts}{Facts}
\newtheorem{rem}[lm]{Remark}

\theoremstyle{remark}

\newcommand{\perm}{\operatorname{Perm}\nolimits}
\newcommand{\NN}{\operatorname{\mathbb{N}}\nolimits}
\newcommand{\ZZ}{\operatorname{\mathbb{Z}}\nolimits}
\newcommand{\QQ}{\operatorname{\mathbb{Q}}\nolimits}
\newcommand{\RR}{\operatorname{\mathbb{R}}\nolimits}
\newcommand{\CC}{\operatorname{\mathbb{C}}\nolimits}
\newcommand{\PP}{\operatorname{\mathbb{P}}\nolimits}
\newcommand{\cA}{\operatorname{\mathcal{A}}\nolimits}
\newcommand{\LL}{\operatorname{\Lambda}\nolimits}

\newcommand{\MM}{\operatorname{\mathbb{M}}\nolimits}
\newcommand{\Fk}{\mathcal{F}_{k,n}}
\newcommand{\Mk}{M_{k,n}}

\newcommand{\ad}{\operatorname{ad}\nolimits}
\newcommand{\im}{\operatorname{im}\nolimits}
\newcommand{\Char}{\operatorname{char}\nolimits}
\newcommand{\Aut}{\operatorname{Aut}\nolimits}

\newcommand{\Id}{\operatorname{Id}\nolimits}
\newcommand{\ord}{\operatorname{ord}\nolimits}
\newcommand{\ggT}{\operatorname{ggT}\nolimits}
\newcommand{\lcm}{\operatorname{lcm}\nolimits}

\newcommand{\Gr}{\operatorname{Gr}\nolimits}

\maketitle

\begin{abstract} Rank $1$ modules are the building blocks of the category  ${\rm CM}(B_{k,n}) $ of Cohen-Macaulay modules over a quotient $B_{k,n}$ of a preprojective algebra of affine type $A$. Jensen, King and Su showed in \cite{JKS16} that the category ${\rm CM}(B_{k,n})$ provides an additive categorification of  the cluster algebra structure on the coordinate ring $\mathbb C[{\rm Gr}(k, n)]$ of the Grassmannian variety of $k$-dimensional subspaces in $\mathbb C^n$. Rank $1$ modules are indecomposable, they are known to be in bijection with $k$-subsets of $\{1,2,\dots,n\}$, and their explicit construction has been given in \cite{JKS16}.  In this paper, we give necessary and sufficient conditions for indecomposability of an arbitrary rank 2 module in ${\rm CM}(B_{k,n})$ whose filtration layers are tightly interlacing.  We give an explicit construction of all rank 2 decomposable modules that  appear as extensions between rank 1 modules corresponding to tightly interlacing $k$-subsets $I$ and $J$. 
\end{abstract}
\noindent



\section{Introduction} 
%

A categorification of the cluster algebra structure on the homogeneous coordinate ring $\mathbb C[{\rm Gr}(k, n)]$ of the Grassmannian variety of $k$-dimensional subspaces in $\mathbb C^n$ has been given by Geiss, Leclerc, and Schroer \cite{GLS06, GLS08} in terms of a subcategory of the category of finite dimensional modules over the preprojective algebra of type $A_{n-1}$. Jensen, King, and Su \cite{JKS16} gave a new categorification of this cluster structure using the maximal Cohen-Macaulay modules over the completion of an algebra $B_{k,n}$ which is a quotient of the preprojective algebra of type $A_{n-1}$.  Rank $1$ modules are the building blocks of the category  ${\rm CM}(B_{k,n}) $ of Cohen-Macaulay modules over a quotient $B_{k,n}$ of a preprojective algebra of affine type $A_{n-1}$. Rank 1 modules are indecomposable, they are known to be in bijection with $k$-subsets of $[n]=\{1,2,\dots,n\}$, and their explicit construction has been given in \cite{JKS16}. These are the building blocks of the category as any module in ${\rm CM}(B_{k,n}) $ can be filtered by rank $1$ modules (the filtration is noted in the profile of a module, \cite[Corollary 6.7]{JKS16}). The number of rank 1 modules appearing in the filtration of a given module is called the rank of that module. In \cite{BBL}, we explicitly constructed all indecomposable rank 2 modules in tame cases. 

In this paper, we give necessary and sufficient conditions for indecomposability of an arbitrary rank 2 module in ${\rm CM}(B_{k,n})$ whose filtration layers are tightly interlacing. Moreover, we construct explicitly all rank 2 decomposable Cohen-Macaulay $B_{k,n}$-modules that appear as middle terms in the short exact sequences where the end terms are rank 1 modules corresponding to tightly interlacing subsets.  The central combinatorial notion throughout this paper is that of $r$-interlacing (Definition~\ref{interlacing}). If $I$ and $J$ are $k$-subsets of $\{1,\ldots, n\}$, then
$I$ and $J$ are said to be {\em $r$-interlacing} if there exist subsets 
$\{i_1,i_3,\dots,i_{2r-1}\}\subset I\setminus J$ and $\{i_2,i_4,\dots, i_{2r}\}\subset J\setminus I$ 
such that $i_1<i_2<i_3<\dots <i_{2r}<i_1$ (cyclically) 
and if there exist no larger subsets of $I$ and of $J$ with this property.

Denote by $L_I$ the rank 1 indecomposable module corresponding to the $k$-subset $I$. By \cite[Proposition 5.6]{JKS16},  ${\rm Ext}_{B}^1(L_I,L_J)\neq 0$ if and only if $I$ and $J$ are  $r$-interlacing, where $r\geq 2$. In particular, rank 1 modules are rigid, i.e.\  ${\rm Ext}_{B}^1(L_I,L_I)=0$ for every $I$. 
This means that if the sets $I$ and $J$ are $1$-interlacing, then the only module appearing as the middle term in short exact sequences with end terms $L_I$ and $L_J$ is the direct sum $L_I\oplus L_J$. For this reason, we will assume most of the time that $I $ and $J$ are $r$-interlacing with $r\geq 2$.  Note also that, by Theorem 3.7 in \cite{BB}, ${\rm Ext}_{B}^1(L_I,L_J)\cong {\rm Ext}_{B}^1(L_J,L_I)$, so we have the same arguments for the short exact sequences with $L_I$ as the left term  and $L_J$ as the right term, and for the short exact sequences with $L_J$ as the left term  and $L_I$ as the right term.

The paper is organized as follows. In Section 2, we recall the definitions and key results
about Grassmannian cluster categories. In Section 3, we study the filtration $I\mid J$, where  $I=\{1,3,\dots, 2r-1\}$ and  $J=\{2,4, \ldots, 2r\},$ in the case $(r,2r)$. We explain how the general case of a module with tight $r$-interlacing filtration layers reduces to the case of  the module with  filtration      $I\mid J$. For the filtration layers $I$ and $J$ of a module with profile $I\mid J$, we construct all decomposable rank 2 modules that are extensions of these rank 1 modules, i.e.\ we construct all decomposable modules that appear as middle terms in short exact sequences with $L_I$ and $L_J$ as end terms.  In particular, we associate with every subset of peaks of the rim $I$  a decomposable rank 2 module that is extension  of $L_{J}$ by $L_{I}$.

Our main results are Theorem \ref{t6} 
in which  we give  necessary and sufficient conditions for a rank 2 module with filtration $I\mid J$ to be indecomposable, and Theorem \ref{paths} 
in which we give an explicit construction of all rank 2 decomposable modules that  appear as extensions between rank 1 modules corresponding to the subsets $I$ and $J$.

%

\section{Preliminaries}
We follow closely the exposition from \cite{JKS16, BB, BBGE, BBL} in order to introduce notation and background results. Let $\Gamma_n$ be the quiver of the boundary algebra, with vertices $1,2,\dots, n$ 
on a cycle and arrows $x_i: i-1\to i$, $y_i:i\to i-1$ (see Figure \ref{quiv}). 
We write ${\rm CM}(B_{k,n})$ for the category of maximal Cohen-Macaulay modules for  
the completed path algebra $B_{k,n}$ of $\Gamma_n$, with relations 
$xy-yx$ and $x^k-y^{n-k}$ (at every vertex). The centre of $B_{k,n}$ is 
$Z:=\CC[|t|]$, where $t=\sum_ix_iy_i$.  
\begin{center}
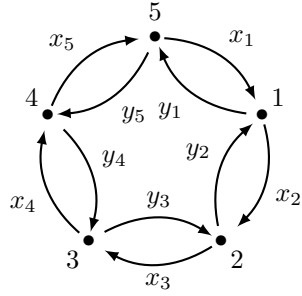
\begin{figure}[H]
\begin{center}
\begin{tikzpicture}[scale=1]
\newcommand{\radius}{1.5cm}
\foreach \j in {1,...,5}{
  \path (90-72*\j:\radius) node[black] (w\j) {$\bullet$};
  \path (162-72*\j:\radius) node[black] (v\j) {};
  \path[->,>=latex] (v\j) edge[black,bend left=30,thick] node[black,auto] {$x_{\j}$} (w\j);
  \path[->,>=latex] (w\j) edge[black,bend left=30,thick] node[black,auto] {$y_{\j}$}(v\j);
}
\draw (90:\radius) node[above=3pt] {$5$};
\draw (162:\radius) node[above left] {$4$};
\draw (234:\radius) node[below left] {$3$};
\draw (306:\radius) node[below right] {$2$};
\draw (18:\radius) node[above right] {$1$};
\end{tikzpicture}
\end{center}
\caption{The quiver $\Gamma_n$ for $n=5$.}\label{quiv} 
\end{figure}
\end{center}
The algebra $B_{k,n}$  coincides with the quotient of the completed path
algebra of the graph $C$ (a circular graph with vertices
$C_0=\mathbb Z_n$ set clockwise around a circle, and with the set of edges, $C_1$, also
labeled by $\mathbb Z_n$, with edge $i$ joining vertices $i-1$ and $i$), i.e.\ the doubled quiver as above,
by the closure of the ideal generated by the relations above (we view the completed path
algebra of the graph $C$ 
as a topological algebra via the $m$-adic topology, where $m$ is the two-sided ideal
generated by the arrows of the quiver, see \cite[Section 1]{DWZ08}). The algebra 
$B_{k,n}$, that we will often denote by $B$ when there is no ambiguity, 
was introduced in \cite[Section 3]{JKS16}. 
Observe that $B_{k,n}$ is isomorphic to $B_{n-k,n}$, so we will always assume that  $k\le \frac n 2$. 

\smallskip

The (maximal) Cohen-Macaulay $B$-modules are precisely those which are
free as $Z$-modules. Such a module $M$ is given by a representation
$\{M_i\,:\,i\in C_0\}$ of
the quiver with each $M_i$ a free $Z$-module of the same rank
(which is the rank of $M$).

\begin{defn}[\cite{JKS16}, Definition 3.5]
For any $B_{k,n}$-module $M$ and $K$ the field of fractions of $Z$, the {\bf rank} 
of $M$, denoted by ${\rm rk}(M)$,  is defined 
to be ${\rm rk}(M) = {\rm len}(M \otimes_Z K)$. 
\end{defn}

Note that $B\otimes_Z K\cong M_n ( K)$, 
which is a simple algebra. It is easy to check that the rank is additive on short exact sequences,
that ${\rm rk} (M) = 0$ for any finite-dimensional $B$-module 
(because these are torsion over $Z$) and 
that, for any Cohen-Macaulay $B$-module $M$ and every idempotent $e_j$, $1\leq j\leq n$, ${\rm rk}_Z(e_j M) = {\rm rk}(M)$, so that, in particular, ${\rm rk}_Z(M) = n  {\rm rk}(M)$.

\begin{defn}[\cite{JKS16}, Definition 5.1] \label{d:moduleMI}
For any $k$-subset   $I$  of $C_1$, we define a rank $1$ $B$-module
\[
  L_I = (U_i,\ i\in C_0 \,;\, x_i,y_i,\, i\in C_1)
\]
as follows.
For each vertex $i\in C_0$, set $U_i=\mathbb C[[t]]$,
for each edge $i\in C_1$, set
\begin{itemize}
\item[] $x_i\colon U_{i-1}\to U_{i}$ to be multiplication by $1$ if $i\in I$, and by $t$ if $i\not\in I$,
\item[] $y_i\colon U_{i}\to U_{i-1}$ to be multiplication by $t$ if $i\in I$, and by $1$ if $i\not\in I$.
\end{itemize}
\end{defn}

The module $L_I$ can be represented by a lattice diagram
$\mathcal{L}_I$ in which $U_0,U_1,U_2,\ldots, U_n$ are represented by columns of vertices (dots) from
left to right (with $U_0$ and $U_n$ to be identified), going down infinitely. 
The vertices in each column correspond to the natural monomial 
$\mathbb C$-basis of $\mathbb C[t]$.
The column corresponding to $U_{i+1}$ is displaced half a step vertically
downwards (respectively, upwards) in relation to $U_i$ if $i+1\in I$
(respectively, $i+1\not \in I$), and the actions of $x_i$ and $y_i$ are
shown as diagonal arrows. Note that the $k$-subset $I$ can then be read off as
the set of labels on the arrows pointing down to the right which are exposed
to the top of the diagram. For example, the lattice diagram $\mathcal{L}_{\{1,4,5\}}$
in the case $k=3$, $n=8$, is shown in Figure \ref{Lattice}.   
\begin{center}
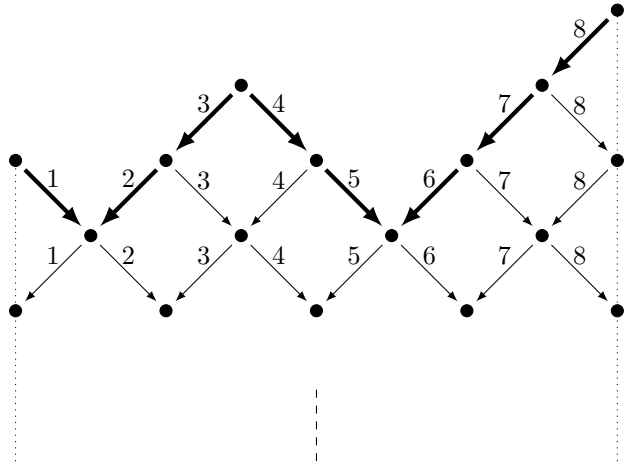
\begin{figure}[H]
\center
\begin{tikzpicture}[scale=1,baseline=(bb.base),
quivarrow/.style={black, -latex, thin}]
\newcommand{\seventh}{51.4} 
\newcommand{\circradius}{1.5cm}
\newcommand{\inradius}{1.2cm}
\newcommand{\outradius}{1.8cm}
\newcommand{\dotrad}{0.1cm} 
\newcommand{\bdrydotrad}{{0.8*\dotrad}} 
\path (0,0) node (bb) {}; 


\draw (0,0) circle(\bdrydotrad) [fill=black];
\draw (0,2) circle(\bdrydotrad) [fill=black];
\draw (1,1) circle(\bdrydotrad) [fill=black];
\draw (2,0) circle(\bdrydotrad) [fill=black];
\draw (2,2) circle(\bdrydotrad) [fill=black];
\draw (3,1) circle(\bdrydotrad) [fill=black];
\draw (3,3) circle(\bdrydotrad) [fill=black];
\draw (4,0) circle(\bdrydotrad) [fill=black];
\draw (4,2) circle(\bdrydotrad) [fill=black];
\draw (5,1) circle(\bdrydotrad) [fill=black];
\draw (6,0) circle(\bdrydotrad) [fill=black];
\draw (6,2) circle(\bdrydotrad) [fill=black];
\draw (7,1) circle(\bdrydotrad) [fill=black];
\draw (7,3) circle(\bdrydotrad) [fill=black];
\draw (8,2) circle(\bdrydotrad) [fill=black];
\draw (8,4) circle(\bdrydotrad) [fill=black];
\draw (8,0) circle(\bdrydotrad) [fill=black];


\draw [quivarrow,shorten <=5pt, shorten >=5pt, ultra thick] (0,2)-- node[above]{$1$} (1,1);
\draw [quivarrow,shorten <=5pt, shorten >=5pt] (1,1) -- node[above]{$1$} (0,0);
\draw [quivarrow,shorten <=5pt, shorten >=5pt, ultra thick] (2,2) -- node[above]{$2$} (1,1);
\draw [quivarrow,shorten <=5pt, shorten >=5pt] (1,1) -- node[above]{$2$} (2,0);
\draw [quivarrow,shorten <=5pt, shorten >=5pt, ultra thick] (3,3) -- node[above]{$3$} (2,2);
\draw [quivarrow,shorten <=5pt, shorten >=5pt] (2,2) -- node[above]{$3$} (3,1);
\draw [quivarrow,shorten <=5pt, shorten >=5pt] (3,1) -- node[above]{$3$} (2,0);
\draw [quivarrow,shorten <=5pt, shorten >=5pt, ultra thick] (3,3) -- node[above]{$4$} (4,2);
\draw [quivarrow,shorten <=5pt, shorten >=5pt] (4,2) -- node[above]{$4$} (3,1);
\draw [quivarrow,shorten <=5pt, shorten >=5pt] (3,1) -- node[above]{$4$} (4,0);
\draw [quivarrow,shorten <=5pt, shorten >=5pt, ultra thick] (4,2) -- node[above]{$5$} (5,1);
\draw [quivarrow,shorten <=5pt, shorten >=5pt] (5,1) -- node[above]{$5$} (4,0);
\draw [quivarrow,shorten <=5pt, shorten >=5pt, ultra thick] (6,2) -- node[above]{$6$} (5,1);
\draw [quivarrow,shorten <=5pt, shorten >=5pt] (5,1) -- node[above]{$6$} (6,0);
\draw [quivarrow,shorten <=5pt, shorten >=5pt] (6,2) -- node[above]{$7$} (7,1);
\draw [quivarrow,shorten <=5pt, shorten >=5pt] (7,1) -- node[above]{$7$} (6,0);
\draw [quivarrow,shorten <=5pt, shorten >=5pt, ultra thick] (7,3) -- node[above]{$7$} (6,2);
\draw [quivarrow,shorten <=5pt, shorten >=5pt] (7,3) -- node[above]{$8$} (8,2);
\draw [quivarrow,shorten <=5pt, shorten >=5pt] (8,2) -- node[above]{$8$} (7,1);
\draw [quivarrow,shorten <=5pt, shorten >=5pt, ultra thick] (8,4) -- node[above]{$8$} (7,3);
\draw [quivarrow,shorten <=5pt, shorten >=5pt] (7,1) -- node[above]{$8$} (8,0);

\draw [dotted] (0,-2) -- (0,2);
\draw [dotted] (8,-2) -- (8,4);

\draw [dashed] (4,-2) -- (4,-1);
\end{tikzpicture}
\caption{Lattice diagram of the module $L_{\{1,4,5\}}$} \label{Lattice}
\end{figure}
\end{center}

We see from Figure \ref{Lattice} that the module $L_I$ is determined by its upper boundary, denoted  by the thick lines, 
which we refer to as the {\em rim} of the module $L_I$ (this is why we call the $k$-subset $I$  the rim of $L_I$). 
Throughout this paper we will identify a rank 1 module $L_I$ with its rim. Moreover, most of the time we will omit the arrows in the rim of $L_I$ and represent it as an undirected graph. 

We say that $i$ is a {\em peak} of the rim $I$ if $i\notin I$ and $i+1\in I$.  In the above example, the peaks of $I=\{1,4,5\}$ are $3$ and $8$. We say that $i$ is a {\em valley} of the rim $I$ if $i\in I$ and $i+1\notin I$.  In the above example, the valleys of $I=\{1,4,5\}$ are $1$ and $5$.

\begin{prop}[\cite{JKS16}, Proposition 5.2]
Every rank $1$ Cohen-Macaulay $B_{k,n}$-module is isomorphic to $L_I$
for some unique $k$-subset $I$ of $C_1$.
\end{prop}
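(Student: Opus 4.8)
The plan is to take an arbitrary rank $1$ Cohen--Macaulay $B_{k,n}$-module $M$, write it out explicitly as a representation of $\Gamma_n$, and show that a suitable diagonal change of basis identifies it with the representation $L_I$ of Definition~\ref{d:moduleMI} for a unique $k$-subset $I$.

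First I would make the data concrete. Since $M$ is free over $Z=\mathbb{C}[[t]]$ of rank $1$, each $M_i=e_iM$ is free of rank $1$ over $Z$, and fixing a $Z$-basis identifies $M_i\cong\mathbb{C}[[t]]$; as $Z$ is central, the structure maps $x_i\colon M_{i-1}\to M_i$ and $y_i\colon M_i\to M_{i-1}$ are $Z$-linear, hence become multiplication by elements $f_i,g_i\in\mathbb{C}[[t]]$. Because $t=\sum_j x_jy_j$ is central and acts on $M_i$ as multiplication by $t$, while $e_ite_i$ is (for the appropriate indexing) the length-two loop at $i$ occurring in $\sum_j x_jy_j$, which acts on $M_i$ as multiplication by $f_ig_i$, we get $f_ig_i=t$ for every $i$. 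Writing $v_t$ for the $t$-adic valuation on $\mathbb{C}[[t]]$, this forces $\{v_t(f_i),v_t(g_i)\}=\{0,1\}$: at each vertex exactly one of $x_i,y_i$ is an isomorphism and the other is a unit times $t$. (The relation $xy-yx$ is then automatic, since on each $M_i$ both composites act as multiplication by $t$.)

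Next I would read off the combinatorics. Put $I:=\{\,i : v_t(f_i)=0\,\}$ and write $f_i=u_it^{a_i}$, $g_i=u_i^{-1}t^{\,1-a_i}$ with $u_i\in\mathbb{C}[[t]]^{\times}$ and $a_i\in\{0,1\}$, so that $i\in I$ iff $a_i=0$. Evaluating the relation $x^k=y^{n-k}$ at a vertex $i$ gives, under our identification, an equality in $\mathbb{C}[[t]]$ between multiplication by $\prod_{j=i+1}^{\,i+k}f_j$ and multiplication by $\prod_{j\notin\{i+1,\dots,i+k\}}g_j$ (indices read mod $n$). Comparing $t$-adic valuations of the two sides yields $|I|=k$, so $I$ is a $k$-subset of $C_1$; comparing their unit parts yields $\prod_{j=1}^{n}u_j=1$. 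Then I would do the change of basis: set $w_0=1$ and $w_i=w_{i-1}u_i^{-1}$ for $i=1,\dots,n$; these are units of $\mathbb{C}[[t]]$, and the identity $\prod_j u_j=1$ is exactly what makes $w_n=w_0$, so rescaling the chosen basis vector of each $M_i$ by $w_i^{-1}$ is a well-defined automorphism of the underlying cyclic $Z$-module. It carries $f_i$ to $t^{a_i}$ and $g_i$ to $t^{\,1-a_i}$, i.e.\ it is an isomorphism from $M$ onto the representation $L_I$ of Definition~\ref{d:moduleMI}. For uniqueness, $I$ is recovered from the isomorphism class of $M$ without reference to a basis --- for instance, $i\in I$ precisely when $x_i$ is an isomorphism --- so $L_I\cong L_J$ forces $I=J$.

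The one step that is more than bookkeeping is closing the rescaling up around the cycle: the local rescalings are a priori incompatible, and it is precisely the relation $x^k=y^{n-k}$, through the constraint $\prod_j u_j=1$ extracted above, that makes them consistent (and simultaneously pins down $|I|=k$). I would therefore present the valuation-and-unit-part analysis of that relation as the crux of the argument, and take care to keep every cyclic index aligned with the conventions $x_i\colon i-1\to i$, $y_i\colon i\to i-1$ fixed at the start.
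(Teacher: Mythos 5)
This proposition is quoted in the paper as background from \cite{JKS16} (Proposition 5.2) without an internal proof, so there is nothing in the paper to compare line by line; your argument is correct and is essentially the standard one: freeness over $Z$ gives $M_i\cong\mathbb{C}[[t]]$ with $x_i,y_i$ acting by $f_i,g_i$ satisfying $f_ig_i=t$, the valuation and unit-part analysis of $x^k=y^{n-k}$ forces $|I|=k$ and $\prod_j u_j=1$, and the latter is exactly what lets the vertexwise rescalings close up around the cycle to give $M\cong L_I$, with uniqueness since $I$ is recovered basis-independently as the set of $i$ where $x_i$ acts invertibly.
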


Every $B$-module has a canonical endomorphism given by multiplication by $t\in Z$.
For ${L}_I$ this corresponds to shifting $\mathcal{L}_I$ one step downwards.
Since $Z$ is central, ${\rm Hom}_B(M,N)$ is
a $Z$-module for arbitrary $B$-modules $M$ and $N$. If $M,N$ are free $Z$-modules, then so is ${\rm Hom}_B(M,N)$. In particular, for any two rank 1 
Cohen-Macaulay $B$-modules $L_I$ and $L_J$, ${\rm Hom}_B(L_I,L_J)$ is a free 
module of rank 1
over $Z=\mathbb C[[t]]$, generated by the canonical map given by placing the 
lattice of $L_I$ inside the lattice of $L_J$ as far up as possible so that no part of the rim of $L_I$ is strictly above the rim of $L_J$ \cite[Section 6]{JKS16}.

\begin{defn}[$r$-interlacing] \label{interlacing}
Let $I$ and $J$ be two $k$-subsets of $\{1,\dots,n\}$. The sets $I$ and $J$ are said to be {\em $r$-interlacing} if there exist subsets  \\ $\{i_1,i_3,\dots,i_{2r-1}\}\subset I\setminus J$ and $\{i_2,i_4,\dots, i_{2r}\}\subset J\setminus I$ 
such that $i_1<i_2<i_3<\dots <i_{2r}<i_1$ (cyclically) 
and if there exist no larger subsets of $I$ and of $J$ with this property.  We say that $I$ and $J$ are 
{\em tightly $r$-interlacing} if they are $r$-interlacing and $|I\cap J|=k-r.$
\end{defn}

\begin{defn}\label{def-rigid}
A $B$-module is \emph{rigid} if ${\rm Ext}^1_B (M,M)=0$.
\end{defn}

If $I$ and $J$ are $r$-interlacing $k$-subsets, where $r<2$, then ${\rm Ext}_{B}^1(L_I,L_J)=0$, in particular, 
rank 1 modules are rigid (see \cite[Proposition 5.6]{JKS16}).

Every indecomposable $M$ of rank $n$ in ${\rm CM}(B)$ has a filtration with  factors 
$L_{I_1},L_{I_{2}},\dots, L_{I_n}$ of rank 1. 
This filtration is noted in its \emph{profile}, 
${\rm pr} (M) = I_1 \mid I_2\mid\ldots \mid I_n$, \cite[Corollary 6.7]{JKS16}.
In the case of  a rank $2$ module $M$ with filtration $L_I\mid L_J$ (i.e.~with profile $I\mid J$), 
we picture this module by drawing the rim $J$ below the rim $I$, in such a way that $J$ is placed as far up as possible so that no part of the rim $J$ is strictly above the rim $I$. We refer to this picture of $M$ 
as its {\em lattice diagram.}
Note that there is at least one point where the rims $I$ and $J$ meet (see Figure \ref{lat}).  
\begin{figure}[H]
\begin{center}
{\includegraphics[width = 8cm]{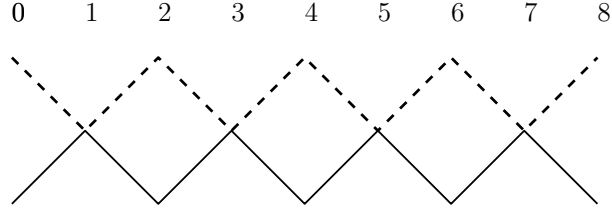}}
\caption{The lattice diagram of a module with filtration  $ L_{\{1, 3, 5,7\}}\mid L_{\{2,4, 6,8\}}$.} \label{lat}
\label{some example}
\end{center}
\end{figure}
The two rims in the lattice diagram of a rank 2 module $M$ form a number of regions between 
the points where the two rims meet but differ in direction before and/or after meeting. 
We call these regions the 
{\em boxes} formed by the rims or by the profile. 
The term box is a combinatorial tool which is very useful in finding conditions for indecomposability.  
However, let us point out that the module $M$ might be a direct sum in which case the lattice diagram is really a pair of  lattice diagrams of rank 1 modules. We still view the corresponding diagram as forming boxes. 
If $I$ and $J$ are $r$-interlacing, then they form exactly $r$-boxes if and only if they are tightly $r$-interlacing.
A lattice diagram with three boxes is shown in Figure~\ref{fig:boxes-poset}. Moreover, the filtration layers of a module $M$ give a poset structure. If $M$ is a rank 2 module with $r_1$ boxes, with $r_1\le r$, the poset structure associated with $M$ is $1^{r_1}\mid 2$, see Figure~\ref{fig:boxes-poset}. The poset consists of a tree with one vertex of degree $r_1$ and $r_1$ leaves, it has dimension $1$ at the leaves and dimension 2 at central vertex (we also refer to this as a {\it dimension lattice}). For background on the poset associated with an indecomposable module or to its profile, 
we refer to~\cite[Section 6]{JKS16} and \cite[Section 2]{BBGEL20}. 
\begin{figure}[H] 
\begin{center}
{\includegraphics[width = 9cm]{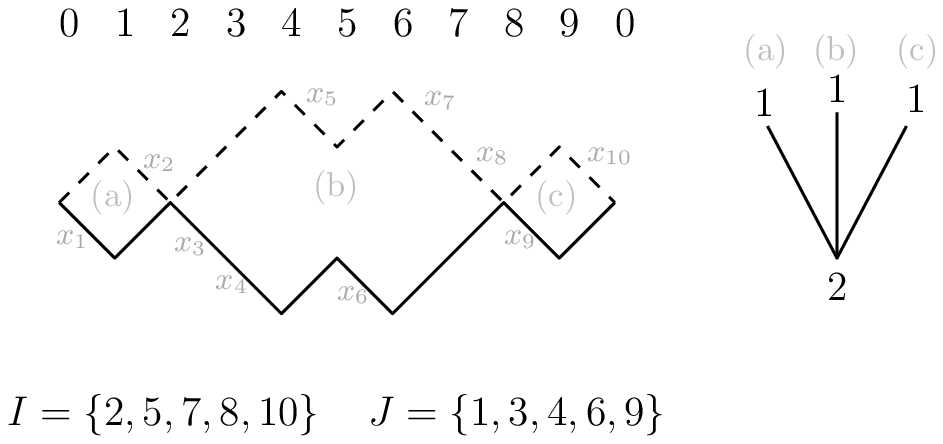}} 
\caption{The profile of  a  module  with $4$-interlacing layers forming three boxes  with poset $1^3\mid 2$. 
The dashed line shows the rim of $L_I$ with arrows $x_i$, $i\in I$, indicated. 
The solid line below is the rim of $L_J$, with arrows $x_i$, $i\in J$, indicated.} \label{fig:boxes-poset}
\end{center}
\end{figure}
A partial answer to the question of indecomposability of a rank 2 module in terms of its poset is given in the following proposition. 

\begin{prop}[\cite{BBGE}, Remark 3.2] \label{poset}
Let $M\in {\rm CM}(B_{k,n})$ be an indecomposable module with profile $I\mid J$. Then  $I$ and $J$ are  $r$-interlacing and their poset is $1^{r_1}\mid 2$, where $r\geq r_1\geq 3$.   
\end{prop}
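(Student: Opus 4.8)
The plan is to prove the contrapositive of the interesting part: if $M$ has profile $I\mid J$ with $I,J$ being $r$-interlacing but their poset is $1^{r_1}\mid 2$ with $r_1\le 2$, then $M$ decomposes. The case $r_1=1$ is essentially already in the excerpt: one box means the two rims meet at a single point and diverge, so $I$ and $J$ are $1$-interlacing, and since rank $1$ modules are rigid (\cite[Proposition 5.6]{JKS16}), ${\rm Ext}^1_B(L_I,L_J)=0$, forcing $M\cong L_I\oplus L_J$. So the heart of the argument is the case $r_1=2$: two boxes. First I would set up the lattice-diagram picture of $M$: draw the rim $J$ as far up as possible below the rim $I$; by hypothesis the two rims meet at exactly two ``contact'' regions and between them form two boxes. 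Label the module as a representation $(M_i; x_i, y_i)$ where each $M_i$ is free of rank $2$ over $Z=\CC[[t]]$, with the maps $x_i,y_i$ read off from the two superimposed rims.

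The key step is to exhibit an explicit idempotent endomorphism of $M$, or equivalently a direct-sum decomposition $M\cong L_{I'}\oplus L_{J'}$ where $I',J'$ are obtained by ``swapping'' the two rims along one of the two boxes. Concretely, in a two-box configuration the rim $I$ lies strictly above $J$ over one box and (after placing $J$ as high as possible) they coincide elsewhere; I would define new $k$-subsets $I'=(I\cap J)\cup(\text{lower choice in box }1)\cup(\text{upper choice in box }2)$ and $J'$ the complementary interleaving, so that $\{I',J'\}$ is the ``uncrossed'' pair. Then I would write down an explicit $B$-module map $M\to L_{I'}\oplus L_{J'}$ componentwise on each $M_i$ (a $2\times 2$ matrix over $Z$ at each vertex, using $1$'s and powers of $t$ dictated by the relative heights of the rims), check it commutes with all $x_i,y_i$ using the two defining relations $xy=yx$ and $x^k=y^{n-k}$, and check it is an isomorphism by computing the determinant of each vertex matrix and seeing it is a unit in $\CC[[t]]$. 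This is the routine-but-bookkeeping-heavy part. An alternative, cleaner route: invoke the Hom-space description from \cite[Section 6]{JKS16} — ${\rm Hom}_B(L_A,L_B)$ is free of rank $1$ over $Z$ generated by the canonical ``fit'' map — to show directly that ${\rm Ext}^1_B(L_I,L_J)$, computed via the profiles, has a summand realized by a split extension exactly when the interlacing pattern yields only two boxes; i.e.\ the two-box extensions all lie in the image of a connecting-type map that factors through a projective, hence split.

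The main obstacle I expect is the combinatorial verification that ``two boxes'' genuinely forces splitting rather than merely being consistent with it: one must rule out that some nonsplit self-extension-like twist survives. I would handle this by appealing to the poset/dimension-lattice formalism of \cite[Section 6]{JKS16} and \cite[Section 2]{BBGEL20}: the poset $1^{r_1}\mid 2$ with $r_1\le 2$ is, as a representation-theoretic shape, decomposable — a central vertex of dimension $2$ with only one or two dimension-$1$ leaves is the poset of $L_{I'}\oplus L_{J'}$ (for $r_1=2$, two leaves, it is literally two intervals glued at a point, and an indecomposable supported on it would have to be a ``string'' spanning both legs, which the $B$-module structure with its symmetric relations does not allow at rank $2$ in this configuration). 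Packaging this: an indecomposable rank $2$ module must have a poset in which the central vertex is ``needed'', which forces $r_1\ge 3$. Finally I would note the symmetric statement (via \cite[Theorem 3.7]{BB}, ${\rm Ext}^1_B(L_I,L_J)\cong{\rm Ext}^1_B(L_J,L_I)$) so that the roles of $I$ and $J$ may be interchanged without loss of generality throughout.
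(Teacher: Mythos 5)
The paper itself offers no proof of this proposition: it is imported verbatim from \cite{BBGE} (Remark 3.2) and used as background, so there is no internal argument to compare yours with; I therefore judge the proposal on its own terms, and it has genuine gaps at both places where it commits to a concrete step. First, in the case $r_1=1$ you identify ``one box'' with ``$1$-interlacing'' and conclude ${\rm Ext}^1_B(L_I,L_J)=0$, hence $M\cong L_I\oplus L_J$. That identification is false: the number of boxes only bounds the interlacing number from below ($r\ge r_1$, since each box begins with a step in $J\setminus I$ and ends with a step in $I\setminus J$); it does not determine $r$. For example, for the profile $\{3,5,6\}\mid\{1,2,4\}$ in ${\rm CM}(B_{3,6})$ the two rims meet only at the vertex $0$, so there is exactly one box, yet the sets are $2$-interlacing (they cross: $3<4<5<1$ cyclically), so by \cite[Proposition 5.6]{JKS16} ${\rm Ext}^1_B(L_I,L_J)\neq 0$ and nonsplit extensions with this profile exist --- directly contradicting the vanishing your argument derives. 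So the $r_1=1$ case is not ``essentially already in the excerpt''; it needs the same kind of proof as $r_1=2$.

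Second, in the case $r_1=2$ you propose to write an explicit isomorphism from $M$ onto one fixed module $L_{I'}\oplus L_{J'}$ obtained by swapping the rims along a box. This cannot work uniformly, because which decomposition occurs depends on the extension class: in the paper's coordinates it depends on which sums $b_i+b_{i+1}$ are divisible by $t$, and the split extension with two boxes decomposes as $L_I\oplus L_J$, not as $L_{I'}\oplus L_{J'}$. What the proposition requires is that for an \emph{arbitrary} extension whose profile forms at most two boxes the endomorphism ring contains a nontrivial idempotent; that is exactly the type of computation carried out in Proposition \ref{lm:n6-hom} and Theorem \ref{t6}, where indecomposability needs two ``active'' sums whose total is not divisible by $t$, a condition that cannot be met when only two boxes are available (the $b_i$ sum to zero), so a nontrivial idempotent always exists. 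Your fallback appeal to the poset/dimension-lattice ``shape being decomposable'' (an indecomposable ``string'' being impossible on a two-leaf poset) assumes precisely what has to be proved and is not an argument. In short, the contrapositive strategy and the idempotent idea are in the right spirit, but the box--interlacing identification is wrong, the fixed-target isomorphism cannot cover all extension classes, and the remaining steps are not carried out.
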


This proposition tells us that when dealing with rank 2 indecomposable modules, we can assume that the poset of such a module  is of the form $1^{r_1}\mid 2$, for $r_1\geq 3$. 

Throughout the paper, our strategy to prove that a module is indecomposable is to show that its endomorphism ring does not have 
non-trivial idempotent elements. 
When we deal with a decomposable rank 2 module, in order to determine the summands of this module, we construct a 
non-trivial idempotent in its endomorphism ring, and then find corresponding eigenvectors at each vertex of the quiver and check 
the action of the morphisms $x_i$ on these eigenvectors.

\section{Tight $r$-interlacing}
In this section we construct all rank 2 decomposable modules with filtration $I\mid J$ in the case when $I$ and $J$ are  tightly $r$-interlacing $k$-subsets, i.e., when $|I\setminus J|=|J\setminus I|=r$ 
and non-common elements of $I$ and $J$ interlace, that is, $|I\cap J|=k-r$.  

We are interested in the modules $M$ that are decomposable and appear as the middle term in a short exact sequence of the form: 
$$0\longrightarrow L_J\longrightarrow M \longrightarrow L_I\longrightarrow 0.$$

In \cite{BBL},  we defined a rank 2 module $\MM(I,J)$ with filtration $L_I\mid L_J$ in a similar way as rank 1 modules 
are defined in ${\rm CM}(B_{k,n})$. We recall the construction here.   
Let $V_i:=\CC[|t|]\oplus\CC[|t|]$, $i=1,\dots, n$. 
The module $\MM(I,J)$ has $V_i$ at each vertex $1,2,\dots, n$ of $\Gamma_n$.   In order to have a module structure, for every $i$ we need to define  $x_i\colon V_{i-1}\to V_{i}$ and $y_i\colon V_{i}\to V_{i-1}$ in such a way that $x_iy_i=t\cdot \id$ and $x^k=y^{n-k}$.   

Since $L_J$ is a submodule of a rank 2 module $\MM(I,J)$, and $L_I$ is the quotient, if we extend the basis of $L_J$ to the basis of the module $\MM(I,J)$, then with respect to that basis all the matrices $x_i$, $y_i$ must be upper triangular with diagonal entries from the set $\{1,t\}$. More precisely, the diagonal of $x_i$ (resp.\ $y_i$) is $(1,t)$ (resp.\ $(t,1)$) if $i\in J\setminus I$, it is $(t,1)$ (resp.\ $(1,t)$)  if $i\in I\setminus J$, $(t,t)$ (resp.\ $(1,1)$) if $i\in I^c\cap J^c$, and $(1,1)$ (resp.\ $(t,t)$) if $i\in I\cap J$. The only entries in all these matrices that are left to be determined are the ones in the upper right corner. 

Let us assume that we deal with the profile $\{1,3,\dots, 2r-1\}\mid \{2,4,\ldots, 2r\}$  in the case $(r,2r)$. In the general case, all arguments are the same.  Denote by $b_i$ the upper right corner element of $x_i$. From  $x_iy_i=t\cdot id$, we have that the upper right corner element of $y_i$ is $-b_i$.  From the relation $x^k=y^{n-k}$ it follows that $\sum_{i=1}^{2r}b_i=0$. If $n=6$, $I=\{1,3,5\}$ and $J=\{2,4,6\}$, then our module $\MM(I,J)$ is 
\begin{figure}[h]
{\small \begin{center} \hfil
\xymatrix@C=5.5em{ 
V_0\ar@<.8ex>[r]^{\begin{pmatrix} t & b_1 \\ 0 & 1 \end{pmatrix}}
&V_1 \ar@<.8ex>[r]^{\begin{pmatrix} 1 & b_2 \\ 0 & t \end{pmatrix}} \ar@<.8ex>[l]^{\begin{pmatrix} 1 & -b_1 \\ 0 & t \end{pmatrix}} 
& V_2  \ar@<.8ex>[r]^{\begin{pmatrix} t & b_3 \\ 0 & 1 \end{pmatrix}}\ar@<.8ex>[l]^{\begin{pmatrix} t & -b_2 \\ 0 & 1 \end{pmatrix}} 
& V_3  \ar@<.8ex>[r]^{\begin{pmatrix} 1 & b_4 \\ 0 & t \end{pmatrix}} \ar@<.8ex>[l]^{\begin{pmatrix} 1 & -b_3 \\ 0 & t \end{pmatrix}}
&V_4 \ar@<.8ex>[r]^{\begin{pmatrix} t & b_5 \\ 0 & 1 \end{pmatrix}} \ar@<.8ex>[l]^{\begin{pmatrix} t & -b_4 \\ 0 & 1 \end{pmatrix}}
&V_5 \ar@<.8ex>[r]^{\begin{pmatrix} 1 & b_6 \\ 0 & t \end{pmatrix}} \ar@<.8ex>[l]^{\begin{pmatrix} 1 & -b_5 \\ 0 & t \end{pmatrix}}
&V_0\,\,\,\ar@<.8ex>[l]^{\begin{pmatrix} t & -b_6 \\ 0 & 1 \end{pmatrix}}
} \hfil
\end{center} }
\caption{A module with filtration $\{1,3,5\}\mid \{2,4,6\}$.}
\end{figure}

The question is how to determine the $b_i$'s so that the module $\mathbb M(I,J)$ is decomposable. In \cite{BBL}, we dealt with the tame cases $(3,9)$ and $(4,8)$, and more generally, the  $3$-interlacing case, and we constructed all such modules and given criteria, in terms of divisibility by $t$ of the sums $b_i+b_{i+1}$ (where $i$ is odd), for the constructed module to be indecomposable. Moreover, in the case of a decomposable module, we determined the summands of such  a module.  In this paper we construct all decomposable modules in the general case of tight $r$-interlacing. We first consider the case $(r,2r)$ and show  how the general case reduces to this case.

Assume first that $\mathbb M(I,J)$ is decomposable and that $L_J$ is a direct summand of $\mathbb M(I,J)$. Then there exists a retraction $\mu=(\mu_i)_{i=1}^n$ such that $\mu_i\circ \theta_i=id$, where $(\theta_i)_{i=1}^n$ is the natural injection of $L_J$ into $\mathbb M(I,J)$. Using the same basis as before, we can assume that $\mu_i=[1\,\, \alpha_i ]$. From the commutativity relations we have $id\circ \mu_i=\mu_{i+1}\circ x_{i+1}$ for $i$ odd, and  $t\cdot id\circ \mu_i=\mu_{i+1}\circ x_{i+1}$ for $i$ even. It follows that 
$\alpha_i=b_{i+1}+t\alpha_{i+1}$ for $i$ odd, and $t\alpha_i=b_{i+1}+\alpha_{i+1}$ for $i$ even. From this we have 
\begin{align*}
t(\alpha_{2i}-\alpha_{2i+2})&=b_{2i+1}+b_{2i+2},
\end{align*} for  $i=0,\dots,r-1$.

Thus, if $L_J$ is a direct summand of $\mathbb M(I,J)$, then $t| b_{i}+b_{i+1}$, for $i$ odd,  and we can easily find $\alpha_i$, $i=1,\dots,n$, satisfying previous equations. If only one of these divisibility conditions is not met, then $L_J$ is not a direct summand of $\MM(I,J)$. Note that if $L_J$ is not a summand of $\MM(I,J)$, it does not mean that $M$ is indecomposable (cf.\ Theorem 3.12 in \cite{BBGE}). We will study the structure of the module $\MM(I,J)$ in terms of the divisibility conditions the sums $b_i+b_{i+1}$ satisfy.

Let us now consider the general case, that is,  let  $\mathbb M(I,J)$ be the module as defined above,  when $I$ and $J$ are tightly $r$-interlacing. Write $I\setminus J$ as $\{i_1,\dots,i_r\}$ and $J\setminus I$ as $\{j_1,\dots,j_r\}$ so that 
$1\le i_1<j_1<i_2<j_2<\dots<i_r<j_r\le n$. Define 
\begin{align*}
&&&&&&x_{i_{l}}&=\begin{pmatrix} t& b_{2l-1} \\ 0 & 1 \end{pmatrix},& x_{j_{l}}&=\begin{pmatrix} 1& b_{2l} \\ 0 & t \end{pmatrix}, &&&&&\\ 
&&&&&&y_{i_{l}}&=\begin{pmatrix} 1& -b_{2l-1} \\ 0 & t \end{pmatrix}, &y_{j_{l}}&=\begin{pmatrix} t& -b_{2l} \\ 0 & 1 \end{pmatrix},&&&&&
\end{align*}
for $l=1,2,\dots, r$ (see previous figure for $n=6$).  For  $i\in  I^c \cap J^c$,  we set  $x_i=\begin{pmatrix} t & 0 \\ 0 & t \end{pmatrix}$ and $y_i=\begin{pmatrix} 1 & 0 \\ 0 & 1 \end{pmatrix}$. For  $i\in I\cap J$,  we set  $x_i=\begin{pmatrix} 1 & 0 \\ 0 & 1 \end{pmatrix}$ and $y_i=\begin{pmatrix} t & 0 \\ 0 & t \end{pmatrix}$.   Also, we assume that $\sum_{l=1}^nb_l=0$.  Note that for  $i\in  (I^c \cap J^c)\cup (I\cap J)$ we define the matrices $x_i$ and $y_i$   to be diagonal, i.e.\ we assume that the upper right corner of $x_i$ and $y_i$ is $0$ if $i\in  (I^c \cap J^c)\cup (I\cap J)$. This is because if it were not $0$, then by a suitable base change of the $V_i$, by changing the second basis element, we obtain a scalar matrix.  By construction,  $xy=yx$ and $x^k=y^{n-k}$ at all vertices, 
and $\MM(I,J)$ is free over the centre of $B_{k,n}$. Hence, the following proposition holds.

\begin{prop} 
The module $\MM(I,J)$ as constructed above is in 
${\rm CM}(B_{k,n})$. 
\end{prop}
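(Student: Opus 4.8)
The plan is to verify directly that the representation $\MM(I,J)$ satisfies the defining relations of $B_{k,n}$ and that it is free as a module over the centre $Z = \CC[|t|]$, since by the characterisation recalled earlier (the maximal Cohen-Macaulay $B$-modules are precisely those that are free over $Z$), these two facts together establish membership in ${\rm CM}(B_{k,n})$. First I would check the commutativity relation $x_iy_i = y_ix_i$ at every vertex. For $i \in (I^c\cap J^c)\cup(I\cap J)$ both matrices are scalar, so this is immediate. For $i = i_l \in I\setminus J$ one computes $x_{i_l}y_{i_l}$ and $y_{i_l}x_{i_l}$ and checks both equal $t\cdot\id$: the diagonal entries multiply to $t$ in either order, and the upper-right entry of $x_{i_l}y_{i_l}$ is $t(-b_{2l-1}) + b_{2l-1}\cdot 1 = 0$ while that of $y_{i_l}x_{i_l}$ is $1\cdot b_{2l-1} + (-b_{2l-1})\cdot t = -tb_{2l-1}+b_{2l-1}$; wait — this needs care, so the honest statement is that one checks $x_iy_i = t\cdot\id = y_ix_i$ for each of the four types by a $2\times 2$ matrix multiplication, using that the off-diagonal entries are arranged precisely so the cross terms cancel. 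The case $i = j_l\in J\setminus I$ is symmetric.

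Next I would verify the relation $x^k = y^{n-k}$ at each vertex, where $x$ (resp.\ $y$) denotes the cyclic composite $x_{i}x_{i-1}\cdots$ (resp.\ $y_{i+1}y_{i+2}\cdots$) going once around the cycle $\Gamma_n$ starting at a fixed vertex. Here I would exploit the upper-triangular form: the product of all the $x_i$ around the cycle has diagonal entries equal to the product of the diagonal entries, and on each of the two diagonal strands one passes through exactly $n-k$ factors equal to $t$ and $k$ factors equal to $1$ (since $|J\setminus I| + |I^c\cap J^c| = n-k$ on the first strand and $|I\setminus J| + |I^c\cap J^c| = n-k$ on the second, using $|I| = |J| = k$), so both diagonal entries of $x^k$ and of $y^{n-k}$ equal $t^{k(n-k)}$. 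For the upper-right entry, the composite $x$ around the cycle is conjugate, after the scalings, to an upper-triangular matrix whose off-diagonal entry is a $Z$-linear combination of the $b_l$; one then invokes $\sum_l b_l = 0$ to see that $x^k$ and $y^{n-k}$ have the same upper-right entry. The cleanest way to organise this is to note that $\MM(I,J)$ sits in a short exact sequence $0\to L_J\to\MM(I,J)\to L_I\to 0$ of representations (the diagonal blocks are exactly the defining data of $L_J$ and $L_I$), so $x^k - y^{n-k}$ acts as $0$ on the sub and on the quotient, hence is given by a strictly upper-triangular (off-diagonal-only) matrix at each vertex; that single off-diagonal entry is then computed to be a multiple of $\sum_l b_l = 0$.

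Finally, freeness over $Z$: each $V_i = \CC[|t|]^{\oplus 2}$ is by definition free of rank $2$ over $Z = \CC[|t|]$, and the $B$-module structure is $Z$-linear since $t = \sum_i x_iy_i$ acts as $t\cdot\id$ by the commutativity relation just checked; hence $\MM(I,J)$ is free over $Z$, so maximal Cohen-Macaulay, completing the argument.

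I expect the main obstacle to be the relation $x^k = y^{n-k}$: unlike the rank $1$ case it is not entirely formal, because one must track how the off-diagonal entries accumulate under the cyclic composite and confirm that the condition $\sum_{l=1}^n b_l = 0$ is exactly what forces the two sides to agree — the short-exact-sequence observation reduces this to a single scalar identity, but identifying that scalar still requires bookkeeping with the ordering $i_1 < j_1 < i_2 < \dots < i_r < j_r$ and the interleaved diagonal-type vertices.
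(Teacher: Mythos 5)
Your overall plan --- verify $x_iy_i=y_ix_i=t\cdot\id$ at every vertex, verify $x^k=y^{n-k}$, and note that $\bigoplus_i V_i$ is free over $Z$ --- is exactly the content behind the paper's one-line justification (the paper simply says the relations hold ``by construction'' and that $\MM(I,J)$ is free over the centre), and your commutativity and freeness checks are fine: the inline cross-term arithmetic you display is garbled, but, as you yourself flag, the honest $2\times 2$ products do give $t\cdot\id$ on both sides for each of the four types of edge. The one substantive problem is your reading of the relation $x^k=y^{n-k}$. In $B_{k,n}$, with $x=\sum_i x_i$ and $y=\sum_i y_i$, this relation at a vertex $i$ equates the path of $k$ \emph{consecutive} $x$-arrows out of $i$ with the path of $n-k$ consecutive $y$-arrows out of $i$, both ending at vertex $i+k$; it is not a statement about powers of the once-around-the-cycle composite. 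Consequently your diagonal bookkeeping (both diagonal entries equal $t^{k(n-k)}$) computes the wrong maps, and verifying the full-loop identity alone is strictly weaker than verifying the defining relation, so as written the $B$-module structure is not actually established.

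The good news is that your ``cleanest way'' closes this gap once it is aimed at the correct maps, and the scalar identity you anticipate is true: the diagonal blocks are precisely the data of $L_J$ and $L_I$, which do satisfy the relations, so at each vertex $m$ the difference of the two paths $V_m\to V_{m+k}$ has only its upper-right entry possibly nonzero; since the $b_l$ sit only on edges of $I\bigtriangleup J$ and the alternation of $I\setminus J$ and $J\setminus I$ forces the $t$-weights multiplying the various $b_l$ within each window of $k$ (respectively $n-k$) consecutive edges to coincide, that entry is $t^{d_m}\sum_l b_l=0$ (for instance, for $I=\{1,3,5\}$, $J=\{2,4,6\}$ one gets $t(b_1+b_2+b_3)+t(b_4+b_5+b_6)$ at vertex $0$); this uniformity of weights is the bookkeeping you predicted and should be written out. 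Alternatively, your loop computation can be salvaged: the composite of all $x$-arrows around the cycle equals $t^{n-k}\cdot\id$ when $\sum_l b_l=0$, and left-composing the desired relation with the path $x^{n-k}$, using the telescoping $x_jy_j=t\cdot\id$ and the injectivity of these matrices (their determinants are nonzero powers of $t$ and all spaces are free over the domain $\CC[|t|]$), recovers $x^k=y^{n-k}$. With either repair the argument is correct and amounts to the detailed version of what the paper asserts.
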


As in the case of the profile $\{1,3,\dots, 2r-1\}\mid \{2,4,\ldots, 2r\}$,  $L_J$ is a direct summand of $\MM(I,J)$  if and only if  $t\mid b_{i}+b_{i+1}$, for all odd $i$. In order to determine the structure of the module $\MM(I,J)$ when these divisibility conditions are not fulfilled (i.e.,  at least one of the sums $b_i+b_{i+1}$ is not divisible by $t$), we determine the structure of an endomorphism of this module.   The following proposition is a generalization of Proposition 3.3  in \cite{BBL}.

For the rest of the paper, if  $t^dv=w$, for a positive integer $d$, then $t^{-d}w$ denotes $v$.

\begin{prop} \label{lm:n6-hom}
For $n\geq 6$, let $I,J$ be tightly $r$-interlacing, $I\setminus J=\{i_1,\dots,i_r\}$, and $J\setminus I=\{j_1,\dots,j_r\}$, where  $1\le i_1<j_1<i_2<j_2<\dots<i_r<j_r\le n$.  If $\varphi=( \varphi_i)_{i=1}^n\in$ {\rm End}$(\MM(I,J))$, then   
\begin{align}\label{end} \nonumber
\varphi_{j_r}&=\begin{pmatrix}a & b \\ c & d  \end{pmatrix},\\  
\varphi_{i_{l}}&=\begin{pmatrix}a+(\sum_{g=1}^{2l-1}b_g)t^{-1}c & tb+(d-a)(\sum_{g=1}^{2l-1}b_g)-(\sum_{g=1}^{2l-1}b_g)^2t^{-1}c \\ t^{-1}c& d-(\sum_{g=1}^{2l-1}b_g)t^{-1}c  \end{pmatrix},\\   \nonumber
\varphi_{j_l}&=\begin{pmatrix}a+(\sum_{g=1}^{2l}b_g)t^{-1}c & b+t^{-1}((d-a)(\sum_{g=1}^{2l}b_g)-(\sum_{g=1}^{2l}b_g)^2t^{-1}c) \\ c& d-(\sum_{g=1}^{2l}b_g)t^{-1}c  \end{pmatrix}, \\ \nonumber
\varphi_{i}&=\varphi_{i-1}, \,\, \text{for } i\in (I^c \cap J^c)\cup (I\cap J),
\end{align}
where $l=1,2,\dots, r$, with $a,b,c,d\in \CC[|t|]$, and 
\begin{align} \label{div} \nonumber
t&\mid c,\\
t&\mid (d-a)(b_1+b_2)-(b_1+b_2)^2t^{-1}c,\nonumber \\
t&\mid (d-a)(b_1+b_2+b_3+b_4)-(b_1+b_2+b_3+b_4)^2t^{-1}c,\\     \nonumber
&\vdots\\ \nonumber   
t&\mid (d-a)\sum_{i=1}^{r-1}(b_{2i-1}+b_{2i})-(\sum_{i=1}^{r-1}b_{2i-1}+b_{2i})^2t^{-1}c. \nonumber 
\end{align}
 
\end{prop}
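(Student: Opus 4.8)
The plan is to compute an arbitrary endomorphism $\varphi=(\varphi_i)_{i=1}^n$ of $\MM(I,J)$ by propagating the commutativity relations $\varphi_i x_i = x_i \varphi_{i-1}$ (equivalently $\varphi_{i-1} y_i = y_i \varphi_i$) around the cycle, starting from the vertex $j_r$ where we name the entries of $\varphi_{j_r}$ as $\begin{pmatrix} a & b \\ c & d\end{pmatrix}$. First I would record, for each type of edge, what the constraint $\varphi_i x_i = x_i \varphi_{i-1}$ says: on a diagonal edge $i\in (I^c\cap J^c)\cup(I\cap J)$ the matrix $x_i$ is a scalar ($t$ or $1$ times the identity), so the relation forces $\varphi_i=\varphi_{i-1}$, giving the last line of \eqref{end}; on an edge $i_l\in I\setminus J$ with $x_{i_l}=\begin{pmatrix}t & b_{2l-1}\\ 0 & 1\end{pmatrix}$ and on an edge $j_l\in J\setminus I$ with $x_{j_l}=\begin{pmatrix}1 & b_{2l}\\ 0 & t\end{pmatrix}$ the relation becomes a system of four scalar equations in $\CC[|t|]$ relating the entries of $\varphi_{i_l}$ (resp.\ $\varphi_{j_l}$) to those of $\varphi_{i_l-1}$ (resp.\ $\varphi_{j_l-1}$).

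Next I would solve these local systems. The key observation is that the $(2,1)$-entry transforms very simply: crossing an $I\setminus J$ edge multiplies the lower-left entry by $t^{-1}$ (it is $y$-type in the first coordinate), crossing a $J\setminus I$ edge multiplies it by $t$, and crossing a diagonal edge leaves it fixed; since between $j_r$ and $i_l$ (going forward along the cycle) one crosses $l$ edges of type $I\setminus J$ and $l-1$ of type $J\setminus I$, the lower-left entry of $\varphi_{i_l}$ is $t^{l}\cdot t^{-(l-1)}\cdot\ldots$ — more precisely, tracking carefully, it is $t^{-1}c$ at $\varphi_{i_l}$ and $c$ at $\varphi_{j_l}$, which already forces $t\mid c$ (the first line of \eqref{div}) so that $t^{-1}c$ is a well-defined element of $\CC[|t|]$. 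Similarly the trace $a+d$ is preserved along the whole cycle (each $x_i$ has the same diagonal up to the common factor, and conjugation-type relations preserve trace), which pins down the diagonal entries of $\varphi_{i_l}$ and $\varphi_{j_l}$ in terms of $a$, $d$, $c$ and the partial sums $\sum_{g\le m} b_g$; then the $(1,2)$-entry is determined by the remaining equation, and its formula is exactly the one displayed — the $(\sum b_g)^2 t^{-1}c$ term arising because the upper-right entry picks up a contribution quadratic in the accumulated shift. I would carry this out by induction on $l$: assuming the stated form at $\varphi_{j_{l-1}}$ (or $\varphi_{i_l-1}=\varphi_{j_{l-1}}$ after passing through any diagonal edges), substitute into the edge relation for $i_l$ and simplify to get the form at $\varphi_{i_l}$, then repeat across $j_l$.

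Finally, the divisibility conditions \eqref{div} come from consistency: the formulas for $\varphi_{i_l}$ and $\varphi_{j_l}$ must have all entries in $\CC[|t|]$ (no genuine negative powers of $t$), and closing the loop — the expression obtained for $\varphi_{j_r}$ after going all the way around must agree with the original $\begin{pmatrix}a&b\\c&d\end{pmatrix}$ — must be an identity. The first requirement, applied at each $j_l$ for $l=1,\dots,r-1$, gives precisely that $t$ divides $(d-a)\sum_{g=1}^{2l}b_g - (\sum_{g=1}^{2l}b_g)^2 t^{-1}c$; at $l=r$ the corresponding expression is automatically divisible since $\sum_{g=1}^{2r}b_g=0$, so the loop closes with no extra condition, and one checks $t\mid c$ covers the $(2,1)$-entries at every $i_l$. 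The main obstacle I anticipate is purely bookkeeping: keeping the partial sums $\sum_{g=1}^{m}b_g$ indexed correctly against the alternating edge types, being careful that $t^{-1}c$ (and more generally $t^{-d}w$ in the convention fixed just before the proposition) is only manipulated once the relevant divisibility has been established, and verifying that no additional relation is hidden in the $(1,2)$-entries — i.e.\ that the upper-right equations are automatically satisfied once trace, $(2,1)$-entry, and the listed divisibilities hold. I expect this to reduce to a short computation using $\sum b_g = 0$.
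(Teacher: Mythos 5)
Your proposal is correct and follows essentially the same route as the paper: propagate the commutativity relations $x_i\varphi_{i-1}=\varphi_i x_i$ around the cycle from $\varphi_{j_r}$, note that the scalar matrices at $(I^c\cap J^c)\cup(I\cap J)$ force $\varphi_i=\varphi_{i-1}$, solve the $2\times 2$ edge relations inductively, and read off the divisibility conditions from the requirement that all entries (in particular $t^{-1}c$ and the upper-right entries at the $j_l$) lie in $\CC[|t|]$, with the loop closing automatically because $\sum_g b_g=0$. Your trace-preservation and lower-left-scaling observations are just a tidy way of organizing the same entrywise computation the paper carries out.
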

\begin{proof} 
Let $\varphi=(\varphi_1,\dots, \varphi_n)$ be an endomorphism of $\MM(I,J)$, where 
each $\varphi_i$ is an element of $M_2(\CC[|t|])$ (matrices over the centre). We use commutativity relations $x_{i+1}\varphi_i = \varphi_{i+1}x_{i+1}$.  From $x_{i_1}x_{i_1-1}\cdots x_{j_r+1}\varphi_{j_r} = \varphi_{i_1}x_{i_1}x_{i_1-1}\cdots x_{j_r+1}$, we obtain $x_{i_1}\varphi_{j_r} = \varphi_{i_1}x_{i_1}$. Recall that $x_{i_1-1}, \dots,x_{j_r+1} $ are scalar matrices so they cancel out.    If $\varphi_{j_r}=\begin{pmatrix} a & b \\ c & d \end{pmatrix}$ and $\varphi_{i_1}=\begin{pmatrix} e & f \\ g & h \end{pmatrix}$, then  
 $t\mid c$,  $e=a+b_1t^{-1}c$,   $f=tb+(d-a)b_1-b_1^2t^{-1}c$, $g= t^{-1}c$, and  $h=d-b_1t^{-1}c$. The rest is shown in the same way.

\begin{center}
\xymatrix{
&&V_{j_r}\ar@<.8ex>[rr]^{x_{j_r+1}} \ar@<.1ex>[ddd]_{\varphi_{j_r}}
&&V_{j_r+1}\ar@<.8ex>[rr]^{x_{j_r+2}} \ar@<.8ex>[ll]^{y_{j_r+1}} 
&& \cdots  \ar@<.8ex>[rr]^{x_{i_1-1}}\ar@<.8ex>[ll]^{y_{j_r+2}} 
&& V_{i_1-1}  \ar@<.8ex>[rr]^{\begin{pmatrix} t & b_1 \\ &&0 & 1 \end{pmatrix}} \ar@<.8ex>[ll]^{y_{i_1-1}} 
&&V_{i_1}  \ar@<.8ex>[ll]^{\begin{pmatrix} 1 & -b_1 \\ &&0 & t \end{pmatrix}} \ar@<.1ex>[ddd]_{\varphi_{i_1}}\\
&&&&&&&&&&\\
&&&&&&&&&&\\
&&V_{j_r}\ar@<.8ex>[rr]^{x_{j_r+1}}
&&V_{j_r+1} \ar@<.8ex>[rr]^{x_{j_r+2}} \ar@<.8ex>[ll]^{y_{j_r+1}} 
&& \dots  \ar@<.8ex>[rr]^{x_{i_1-1}}\ar@<.8ex>[ll]^{y_{j_r+2}} 
&& V_{i_1-1}  \ar@<.8ex>[rr]^{\begin{pmatrix} t & b_1 \\ 0 & 1 \end{pmatrix}} \ar@<.8ex>[ll]^{y_{i_1-1}}
&&V_{i_1}  \ar@<.8ex>[ll]^{\begin{pmatrix} 1 & -b_1 \\ 0 & t \end{pmatrix}}\\
}
\end{center}

The only thing left to note is that if $i\in (I^c \cap J^c)\cup (I\cap J)$, then $x_i$ is a scalar matrix (either identity or $t$ times identity), so from  $x_{i}\varphi_{i-1}=\varphi_{i}x_{i}$, it follows immediately that  $\varphi_{i-1}=\varphi_{i}$.
\end{proof}

By Remark 3.4 in \cite{BBL},  if $\varphi$ is the morphism from the previous proposition, then it is sufficient to prove for a single index $i$ that $\varphi_i$ is idempotent in order to prove that $\varphi$ is idempotent. Also, note that in our computations, for $i\in (I^c \cap J^c)\cup (I\cap J),$ $x_i$ is a scalar matrix, it commutes with every other matrix and it cancels out in $x_{i}\varphi_{i-1}=\varphi_{i}x_{i}$, so it can be left out.

We now give necessary and sufficient conditions for the module $\mathbb M(I,J)$ to be indecomposable. 

\begin{theorem} \label{t6}
Let $\MM(I,J)$ be as in the previous proposition. The module $\MM(I,J)$ is indecomposable if and only if there exist odd  indices  $i_{l_1}$  and  $i_{l_2}$ such that  $t\mid b_i+b_{i+1}$, for $i_{l_1}<i<i_{l_2}$, $i$ odd, $t\nmid b_{i_{l_1}}+b_{i_{l_1}+1}$, $t\nmid b_{i_{l_{2}}}+b_{i_{l_{2}}+1}$, and $t\nmid b_{i_{l_1}}+b_{i_{l_1}+1}+b_{i_{l_{2}}}+b_{i_{l_{2}}+1}$. 
\end{theorem}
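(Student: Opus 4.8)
The strategy is to analyze when the endomorphism ring $\operatorname{End}(\MM(I,J))$ contains a non-trivial idempotent, using the explicit description of endomorphisms from Proposition~\ref{lm:n6-hom}. By the remark following that proposition, an endomorphism $\varphi$ is idempotent as soon as a single component $\varphi_i$ is idempotent; I will work with $\varphi_{j_r} = \begin{pmatrix} a & b \\ c & d\end{pmatrix}$. An idempotent $2\times 2$ matrix over $\CC[|t|]$ that is neither $0$ nor $\id$ must have eigenvalues $0$ and $1$, hence trace $a+d=1$ and determinant $ad-bc=0$; conversely any such matrix over the local ring $\CC[|t|]$ with $a+d=1$, $ad=bc$ is a non-trivial idempotent. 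So the module is decomposable if and only if there exist $a,b,c,d\in\CC[|t|]$ satisfying $a+d=1$, $ad-bc=0$, together with the divisibility constraints \eqref{div} coming from $\varphi$ being a genuine endomorphism (i.e.\ all $\varphi_{i_l}, \varphi_{j_l}$ having entries in $\CC[|t|]$, not just in the fraction field).

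**Reformulating the constraints.** Writing $s_l := \sum_{g=1}^{2l} b_g = \sum_{i=1}^{l}(b_{2i-1}+b_{2i})$ for the relevant partial sums (these are exactly the quantities appearing in \eqref{div}, since $b_{2l-1}+b_{2l}$ is the increment at the odd index $i_l$, and note $b_{2i-1}+b_{2i}$ corresponds to $b_{i_l}+b_{i_l+1}$ in the statement after the reduction to the $(r,2r)$ profile), the constraints \eqref{div} read: $t\mid c$, and $t \mid (d-a)s_l - s_l^2 t^{-1}c$ for $l=1,\dots,r-1$. Set $c = t c'$ with $c'\in\CC[|t|]$; then the $l$-th condition becomes $t \mid (d-a)s_l - s_l^2 c'$, i.e.\ $t\mid s_l\big((d-a) - s_l c'\big)$. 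I would then split into cases according to which partial sums $s_l$ are divisible by $t$. The hypothesis "$t\mid b_i+b_{i+1}$ for $i_{l_1}<i<i_{l_2}$" says precisely that $s_l - s_{l_1}$ is divisible by $t$ for $l_1\le l\le l_2$ (telescoping), while $t\nmid b_{i_{l_1}}+b_{i_{l_1}+1}$ and the other non-divisibility hypotheses pin down exactly which $s_l$ are units and which are not, and — crucially — that $s_{l_2}\not\equiv s_{l_1-1}\pmod t$, i.e.\ the "jump" does not cancel.

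**The two directions.** For the \emph{if} direction, given the two indices $i_{l_1}, i_{l_2}$ as in the statement I must show no non-trivial idempotent exists. Suppose $\varphi$ is a non-trivial idempotent, so $a+d=1$ and $d-a$ is a unit or not. If $d-a$ is a unit: for each $l$ with $s_l$ a unit we need $c' \equiv (d-a)s_l^{-1}\cdot s_l = \dots$ — more precisely $t\mid s_l((d-a)-s_lc')$ forces $(d-a)\equiv s_l c'\pmod t$ whenever $s_l$ is a unit mod $t$. The point is that among $l\in\{l_1-1, l_1, \dots, l_2\}$ (reindexed appropriately), consecutive $s_l$'s in the range $(l_1, l_2)$ are congruent mod $t$ but $s_{l_1-1}$ and $s_{l_2}$ differ from that common value in a way that makes the system $(d-a)\equiv s_l c'$ inconsistent mod $t$ — this is where $t\nmid b_{i_{l_1}}+b_{i_{l_1}+1}+b_{i_{l_2}}+b_{i_{l_2}+1}$ enters. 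One then also has to handle $d-a$ non-unit, where $a\equiv d\equiv$ the non-unit root of $\lambda^2=\lambda$, impossible in $\CC[|t|]$ unless... actually $a,d$ with $a+d=1$, $ad=bc$: modulo $t$, $\bar a+\bar d=1$ and $\bar a\bar d=\overline{bc}$; if $d-a$ is a non-unit then $\bar a=\bar d=1/2$, and then $\overline{bc}=1/4\ne 0$ so $b,c$ are both units mod $t$ — but $t\mid c$, contradiction. So $d-a$ must be a unit, and the argument above applies. For the \emph{only if} direction, assuming no such pair $(i_{l_1},i_{l_2})$ exists, I construct a non-trivial idempotent: the failure of the condition means the partial sums $s_l$ fall into "runs" that are internally congruent mod $t$ and such that one can choose $c'$ making all constraints $t\mid s_l((d-a)-s_lc')$ hold — essentially pick $d-a$ and $c'$ compatibly run by run, then solve $a+d=1$, and choose $b$ via $bc = ad$ (possible since $t\mid c$ and one checks $t\mid ad$; e.g.\ take $a$ or $d$ divisible by $t$).

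**Main obstacle.** The delicate part is the bookkeeping in the \emph{if} direction: translating the three non-divisibility hypotheses on the $b_i+b_{i+1}$ into an inconsistency of a linear system over $\CC[|t|]/(t) = \CC$ in the unknowns $\overline{d-a}$ and $\overline{c'}$, while simultaneously ruling out the degenerate branches (idempotents with $d-a$ non-unit, or where some needed $s_l$ is itself a non-unit so its constraint is vacuous). One must be careful that the constraint at $l$ is vacuous exactly when $t\mid s_l$, and verify that enough non-vacuous constraints survive — specifically those at $l_1-1$ (if $l_1>1$; the case $l_1=1$ where $s_0=0$ is automatically a degenerate "run start" needs separate light handling) and the chain up through $l_2$ — to force the contradiction. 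Getting the indexing conventions between the abstract $(r,2r)$ model and the general tightly $r$-interlacing labeling to line up is the other place where care is needed, but the reduction is already asserted in the text, so I would invoke it and work entirely in the $(r,2r)$ picture.
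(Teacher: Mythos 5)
Your proposal is correct and follows essentially the same route as the paper's proof: both reduce to analysing idempotents via the divisibility conditions of Proposition~\ref{lm:n6-hom} at the single vertex $j_r$, both force (under the stated hypothesis) that $t\mid t^{-1}c$ and $t\mid d-a$ and then rule out non-trivial idempotents via the trace/determinant (i.e.\ $a=d$ versus $a+d=1$) dichotomy, and in the decomposable case both exhibit the same explicit idempotent with $a=1$, $d=0$, $b=0$ and $c$ a unit multiple of $t$. Your partial-sum reformulation is just a more systematic packaging of the paper's subtraction of two divisibility conditions (and it quietly absorbs the paper's ``without loss of generality'' placement of the violating pair), so it is not a genuinely different method.
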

\begin{proof}
As in the proof of the previous proposition, it is sufficient to consider the case of tight $r$-interlacing, where $I=\{1,3,5,\dots,2r-1\}$ and $J=\{2,4,6, \dots, 2r\}$.  Let $i_{l_1}, i_{l_2}, \dots, i_{l_s}$ be all odd indices $i$ (in cyclic ordering) such that the sum $b_i+b_{i+1}$ is not divisible  by $t$. We assume that there is at least one such index because if $t\mid b_i+b_{i+1}$ for all odd $i$, then $\MM(I,J)$ is the direct sum $L_I\oplus L_J$.

 Let $\varphi=( \varphi_i)_{i=0}^{n-1}\in$ End$(\MM(I,J))$ be an idempotent homomorphism and assume that $\varphi_{j_r}=\begin{pmatrix}a & b \\ c & d  \end{pmatrix}$. The divisibility conditions (\ref{div}) from the previous proposition reduce to 
\begin{align} \label{div0} \nonumber
t&\mid c, \\ \nonumber  
  t&\mid (d-a)(b_{i_{l_1}}+b_{i_{l_1}+1})-(b_{i_{l_1}}+b_{i_{l_1}+1})^2t^{-1}c,  \\ \nonumber
  t&\mid (d-a)(b_{i_{l_1}}+b_{i_{l_1}+1}+b_{i_{l_2}}+b_{i_{l_2}+1})-(b_{i_{l_1}}+b_{i_{l_1}+1}+b_{i_{l_2}}+b_{i_{l_2}+1})^2t^{-1}c. \\ 
  &\vdots \\     \nonumber
t&\mid (d-a)\sum_{g=1}^{s-1}(b_{i_{l_g}}+b_{i_{l_g}+1})-\left(\sum_{g=1}^{s-1}(b_{i_{l_g}}+b_{i_{l_g}+1})\right)^2t^{-1}c.\nonumber
\end{align}
 
Without loss of generality we assume that $t\nmid b_{i_{l_1}}+b_{i_{l_1}+1}+b_{i_{l_{2}}}+b_{i_{l_{2}}+1}$. Relations (\ref{div0})  are equivalent to 
\begin{align*}t&\mid d-a-(b_{l_1}+b_{{l_1}+1})t^{-1}c,\\ 
t&\mid d-a-(b_{l_1}+b_{{l_1}+1}+b_{l_2}+b_{{l_2}+1})t^{-1}c.
\end{align*}
 
 Thus, it must hold that $$t\mid (b_{l_2}+b_{{l_2}+1})t^{-1}c,$$ and since $t\nmid b_{l_2}+b_{{l_2}+1}$, it must be that $t\mid t^{-1}c$, and subsequently that $t\mid d-a$.  

From the fact that $\varphi_{j_r}$ is idempotent and $t\mid c$ it follows that $t\mid a-a^2$ and $t\mid d-d^2$. Also, from $\varphi_{j_r}^2=\varphi_{j_r}$ it follows that either $a=d$ or $a+d=1$.  If $a=d$, then $b=c=0$ (otherwise $a=d=\frac 12$ and $\frac 14=bc$, which is not possible as $c$ is divisible by $t$), and $a=d=1$ or $a=d=0$ giving us the trivial idempotents. If $a+d=1$, then $t\mid a$ or $t\mid d$. Taking into account that $t\mid d-a$, we conclude that $t\mid a$ and $t\mid d$. This implies that $1=a+d$ is divisible by $t$, which is not true. Thus, the only idempotent homomorphisms of $\MM(I,J)$ are the trivial ones. Hence, $\MM(I,J)$ is indecomposable.

Assume now that $t\mid b_{i_{l_g}}+b_{i_{l_g}+1}+b_{i_{l_{g+1}}}+b_{i_{l_{g+1}}+1}$ for every $g<s$. 
Then the divisibility conditions (\ref{div0}) for the endomorphism $\varphi$ reduce to a single condition 
$$t\mid d-a-(b_{l_1}+b_{{l_1}+1})t^{-1}c.$$  
In order to find a non-trivial idempotent $\varphi$, we only need to find elements $a$, $b$, $c$, and $d$ in such a way that $t\mid c$ and $t\mid d-a-(b_{l_1}+b_{{l_1}+1})t^{-1}c.$ Recall that if $a=d$, then we only obtain the trivial idempotents because $t\mid c$. So it must be $a+d=1$ if we want to find a non-trivial idempotent. If we choose $a=1$, $d=0$, then $t\mid 1+(b_{l_1}+b_{{l_1}+1})t^{-1}c$. Thus, we can define $c=t(b_{l_1}+b_{{l_1}+1})^{-1}$, and $b=0$ since $a-a^2=bc$ and $c\neq 0$, to get the  idempotent:  
$$\varphi_{j_r}=\begin{pmatrix}
1& 0\\
-t(b_{l_1}+b_{{l_1}+1})^{-1}&0
\end{pmatrix}.$$ 

Since this is a non-trivial idempotent, it follows that the module $\MM(I,J)$ is decomposable. 
\end{proof}

\begin{rem}
From the previous theorem it follows that if $\MM(I,J)$ is a decomposable module, then since $\sum_{i=1}^nb_i=0$ there is an even number of odd $i$ such that $t\nmid b_{i}+b_{i+1}.$  If there were an odd number of odd $i$ such that $t\nmid b_{i}+b_{i+1}$, then for two consecutive $l_1$ and $l_2$, it would hold that $t\nmid b_{i_{l_1}}+b_{i_{l_1}+1}+b_{i_{l_{2}}}+b_{i_{l_{2}}+1}.$ Our aim is to determine all such decomposable modules, so for the rest of the paper we will assume that there is an even number of odd indices $i_{l_g}$ such that $t\nmid b_{i_{l_g}}+b_{i_{l_g}+1}$, and that $t\mid b_{i_{l_g}}+b_{i_{l_g}+1}+b_{i_{l_{g+1}}}+b_{i_{l_{g+1}}+1}$ for every $g$.
\end{rem}

\begin{corollary} If $n<6$, then there are no indecomposable rank $2$ modules in ${\rm CM}(B_{k,n})$. 
\end{corollary}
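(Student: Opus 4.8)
The plan is to read off the statement directly from Proposition~\ref{poset}, turning it into a purely numerical incompatibility with the standing assumption $k\le n/2$.

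First I would recall that every Cohen--Macaulay $B_{k,n}$-module is filtered by rank $1$ modules \cite[Corollary 6.7]{JKS16}, and that rank is additive on short exact sequences; hence a rank $2$ module $M$ has a profile consisting of exactly two layers, say ${\rm pr}(M)=I\mid J$ for some $k$-subsets $I,J\subseteq\{1,\dots,n\}$. Now suppose, towards a contradiction, that such an $M$ is indecomposable. By Proposition~\ref{poset}, $I$ and $J$ must then be $r$-interlacing with associated poset $1^{r_1}\mid 2$ for some $r\ge r_1\ge 3$. In particular $r\ge 3$, so by Definition~\ref{interlacing} there is a subset $\{i_1,i_3,\dots,i_{2r-1}\}\subseteq I\setminus J$ of cardinality $r\ge 3$. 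Consequently $k=|I|\ge |I\setminus J|\ge r\ge 3$.

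On the other hand, since $B_{k,n}\cong B_{n-k,n}$ we may and do assume $k\le n/2$; for $n<6$ this gives $k\le 2$, which contradicts $k\ge 3$. Therefore no indecomposable rank $2$ module can exist in ${\rm CM}(B_{k,n})$ when $n<6$, which is the assertion of the corollary.

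Since the whole argument is a cardinality bound fed into Proposition~\ref{poset}, I do not anticipate any genuine obstacle. The only point deserving a word of care is the very first one: that a rank $2$ module really does have a two-layer profile $I\mid J$ with $I,J$ honest $k$-subsets, so that Proposition~\ref{poset} applies; this is immediate from the filtration result of \cite{JKS16} together with additivity of rank. One could alternatively phrase the conclusion as a consistency check against Theorem~\ref{t6}, whose indecomposability criterion requires two distinct odd indices $i_{l_1},i_{l_2}$ in $I\setminus J$ and hence again $r\ge 3$, but the route through Proposition~\ref{poset} is the cleaner one and is what I would write up.
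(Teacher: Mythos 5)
Your argument is correct, and since the paper states this corollary without proof, your route via Proposition~\ref{poset} (an indecomposable rank $2$ module has $r$-interlacing layers with $r\ge 3$, forcing $k\ge 3$, incompatible with the standing assumption $k\le n/2$ when $n<6$) is exactly the immediate justification the paper intends, with Theorem~\ref{t6} serving as the consistency check you mention. A tiny stylistic improvement: instead of invoking $k\le n/2$, you can note that $r$-interlacing with $r\ge 3$ already requires $n\ge |I\setminus J|+|J\setminus I|\ge 2r\ge 6$, which makes the statement independent of that normalization.
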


The rest of the paper is dedicated to the determination of the summands of the module  $\MM(I,J)$ in the case when this module is decomposable. It is sufficient to study the case of the filtration $\{1,3,\dots, 2r-1\}\mid \{2,4,\dots,2r\}$ when $k=r$ and $n=2r$. Then the general case of tight $r$-interlacing follows because the scalar matrices can be ignored since they do not affect any of the computations we conduct. 

Denote $I=\{1,3,\dots, 2r-1\}$ and $J=\{2,4,\dots, 2r\}.$  As before, assume that $x_i=\begin{pmatrix} t& b_{i} \\ 0 & 1 \end{pmatrix}$ for odd $i$ and $x_{i}=\begin{pmatrix} 1& b_{i} \\ 0 & t \end{pmatrix}$ for even $i$, and  that $\sum_{i=1}^{2r}b_i=0$ so that we have a module structure, which we again denote by $\MM(I,J)$. 

The dimension lattice of a given module in ${\rm CM}(B_{k,n})$ is additive on short exact sequences. 
If $\MM(I,J)$ is the direct sum $L_X\oplus L_Y$, then from the short exact sequence $$0\longrightarrow L_J\longrightarrow L_X\oplus L_Y \longrightarrow L_I \longrightarrow 0$$ follows that the dimension lattices of $L_X$ and $L_Y$ add up to the sum of the dimension lattice of $L_I$ and the dimension lattice of $L_J$.  In terms of the rims, one way to combinatorially describe possible summands $L_X$ and $L_Y$ is by the fact that the rim of $X$ has to be ``taken out'' from the lattice diagram of $L_I\oplus L_J$, i.e., of the profile $I\mid J$, in such a way that the leftover part of the lattice diagram is the rim  $Y$. 

In terms of the lattice diagram of the profile $I\mid J$ (recall that we picture the lattice diagram of $I\mid J$ by drawing the rim $J$ below the rim $I$, in such a way that $J$ is placed as far up as possible so that no part of the rim $J$ is strictly above the rim $I$), the rim $X$ corresponds to a subset of the set of the peaks of $I$, and the rim $Y$ corresponds to the complement of this set with respect to the set of peaks of $I$. To describe this in terms of the path we take in the lattice diagram of $I\mid J$ by travelling from left to right, we start from a peak of $I$ and move to the right (we either go up or down in each step). If we are at a peak of $I$ (resp.\ valley of $J$), then the next step has to be down (resp.\ up).   If we are at a peak of $J$, which is also a valley of $I$, then we have a choice of going up or down. Eventually, to finish our trip, we have to return to the peak where we started off. The rim $X$ is determined by the set of peaks of $I$ that we passed through during our trip through the lattice diagram of $I\mid J$ (by abuse of notation we say that $X$ passes through this set of peaks), and the rim $Y$ is determined by the peaks of $I$ we did not pass through.      

The first four pictures in Figure \ref{r4} correspond to the case when $X$ passes through a single peak of $I$ (and $Y$ passes through three peaks) when we travel from left to right through the lattice diagram of $I\mid J$ (or more precisely, through the rims of $I$ and $J$, see the next example). The next three pictures correspond to the  case when $X$ passes through two peaks of $I$ (and $Y$ passes through two peaks), and the last picture corresponds to the case when $X$ passes through all peaks of $I$ and $Y$ passes through none. Obviously, there is symmetry in the argument so the case when $X$ passes through one peak and $Y$ through three peaks is the same as the case when $X$ passes through three peaks and $Y$ passes through one peak.  In total, there are $2^{r-1}$ different cases, so there are $2^{r-1}$ corresponding decomposable modules.


\begin{ex}\label{48} In the case $r=4$, there are eight possible choices for $X$ and $Y$ in such a way that the sum of the dimension lattices of $X$ and $Y$ is equal to the dimension lattice of the profile $I\mid J$ . They are given in Figure \ref{r4}. Note that the set of peaks of $I$ (i.e., the set of valleys of $J$) is $\{0,2,4,6\}$ and the set of peaks of $J$ (i.e., the set of valleys of $I$) is $\{1,3,5,7\}$.
\begin{figure}[H]
\begin{center}
\subfloat[$L_{\{1, 3, 5,6\}}\oplus L_{\{2,4, 7,8\}}$]{\includegraphics[width = 6cm]{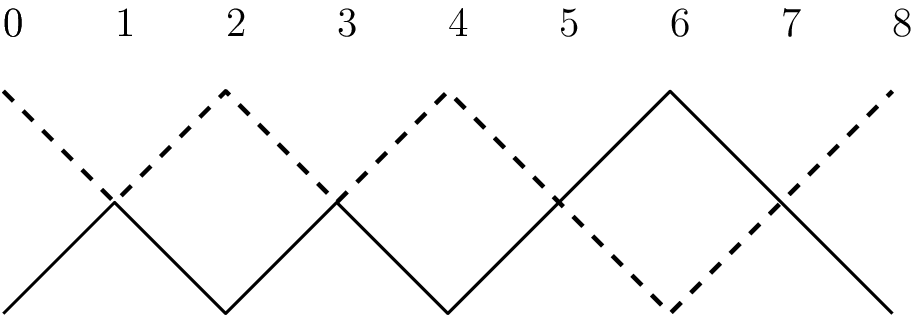}}  \quad \quad\quad\quad 
\subfloat[$L_{\{1, 2, 4, 6\}}\oplus L_{\{3,5, 7,8 \}}$]{\includegraphics[width = 6cm ]{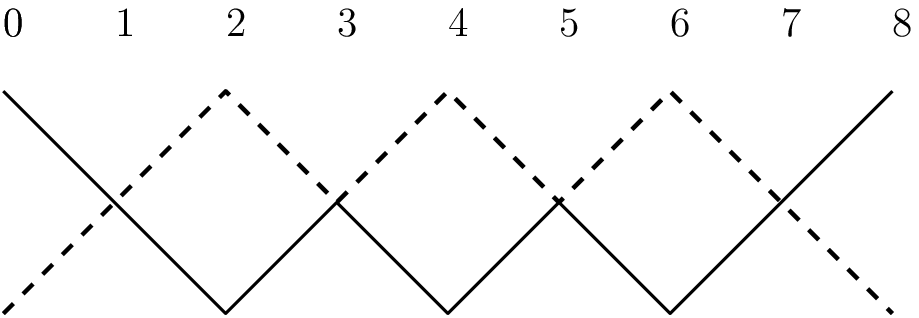}}\\
\subfloat[$L_{\{1, 2, 5 , 7\}}\oplus L_{\{3,4, 6, 8\}}$]{\includegraphics[width = 6cm ]{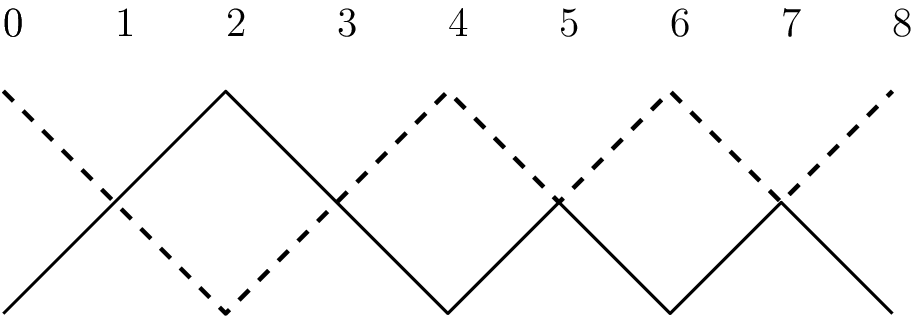}}  \quad \quad \quad \quad
\subfloat[$L_{\{1, 3, 4, 7\}}\oplus L_{\{2,5,6, 8\}}$]{\includegraphics[width = 6cm ]{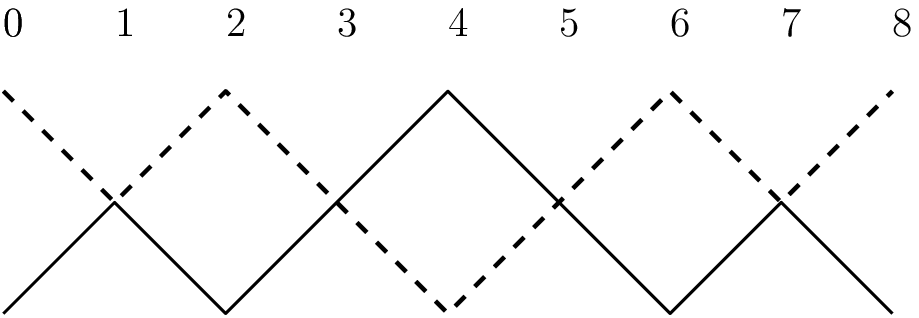}} \\
\subfloat[$L_{\{1, 3, 4,6\}}\oplus L_{\{2,5, 7,8\}}$]{\includegraphics[width = 6cm]{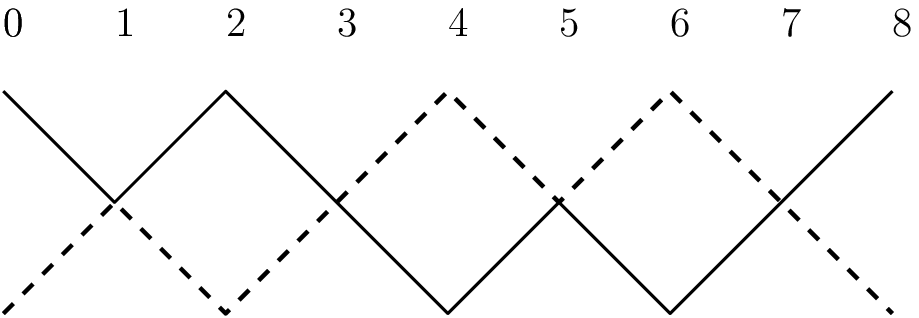}}  \quad \quad\quad\quad 
\subfloat[$L_{\{1, 2, 4, 7\}}\oplus L_{\{3,5, 6,8 \}}$]{\includegraphics[width = 6cm ]{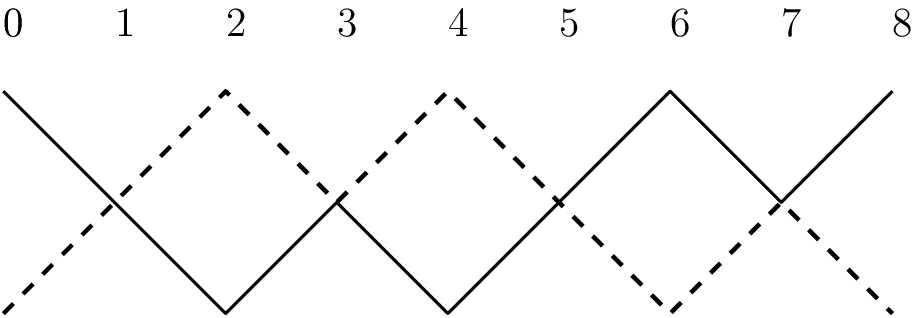}}\\
\subfloat[$L_{\{1,2,5,6\}}\oplus L_{\{3,4,7,8\}}$]{\includegraphics[width = 6cm]{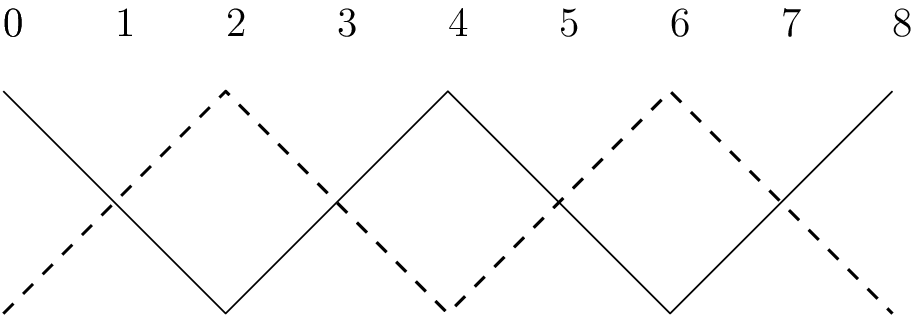}} \quad \quad \quad 
\quad
\subfloat[$L_{\{1,3,5,7\}}\oplus L_{\{2,4,6,8\}}$]{\includegraphics[width = 5.5cm]{1357-2468.eps}}
\caption{The pairs of profiles of decomposable extensions between $ L_{\{1, 3, 5,7\}}$ and $L_{\{2,4, 6,8\}}$.}
\label{r4}
\end{center}
\end{figure}
\end{ex}

Note that we only classify decomposable modules that are extensions of $L_J$ by $L_I$, not all possible extensions (cf.\ Remark 3.9 in \cite{BBL}).

For a given $X$, i.e., for a given subset of the set of peaks of $I$, and the corresponding $Y$, we give the divisibility conditions for the sums $b_i+b_{i+1}$ so that the module $\MM(I,J)$  is isomorphic to $L_X\oplus L_Y$. We denote by $X'$ (resp.\ $Y'$) the set of peaks of $I$ that corresponds to $X$ (resp.\ $Y$).

 If $X$ passes through every peak of $I$, then this is the case when $X=I$ and $Y=J$, i.e., the case of the direct sum $L_I\oplus L_J$. In terms of the divisibility conditions, this is the case when $t\mid b_i+b_{i+1}$, for every odd $i$. 
 
 Assume now that $X'$ does not contain all peaks of $I$. This means that there is a peak, say $2j$, that belongs to $X'$, such that the next peak $2j+2$ belongs to $Y'$. Then $b_{2j+1}+b_{2j+2}$ is not divisible by $t$. If it were divisible by $t$, then $2j+2$ would belong to $X'$ as we explain below.

If the current peak, say $2j$, belongs to $X'$, i.e., we are at the peak $2j$, then if the next peak $2j+2$ belongs to $X'$, then $2j+1\in X$, $2j+1\notin Y$, $2j+2\notin X$, and $2j+2\in Y$.  In this situation we are moving from a peak to another peak by going down and then up. Note that we pass through two vertices each time we move from one peak to another.   

If the current peak $2j$ does not belong to $X'$, i.e., we are at the valley $2j$, then if the next peak $2j+2$ belongs to $X'$, then $2j+1, 2j+2\notin X$, and $2j+1, 2j+2\in Y$. In this situation we are moving from a valley to a peak by going up and up.

If the current peak $2j$ belongs to $X'$, i.e., we are at the peak $2j$, then if the next peak $2j+2$ does not belong to $X'$, then  $2j+1, 2j+2\in X$, and $2j+1, 2j+2\notin Y$.  In this situation we are moving from a peak to a valley  by going down and down.

If the current peak $2j$ does not belong to $X'$, i.e., we are at the valley $2j$, then if the next peak $2j+2$ does not belong to $X'$, then $2j+1\notin X$, $2j+1\in Y$, $2j+2\in X$, and $2j+2\notin Y$. In this situation we are moving from a valley to another valley by going up and then down.

Recall that there has to be an even number of steps where we go from a valley to a peak or from a peak to a valley so that we can come back up to the point where we started. 
      
\begin{theorem} \label{paths}
Let $X'$ be a subset of the set of peaks of $I$, $\{0,2,4,\dots,2r-2\}$, $Y'$ its complement,  and $X$ and $Y$ corresponding $k$-subsets of $[n]$. Also, assume that $1\leq |X'|<r$. Starting from a peak in $X'\setminus Y'$, and moving to the right, define sums $b_i+b_{i+1}$ so that the following conditions hold:
\begin{enumerate}
\item if the current peak $2j$ belongs to $X'$, then if the next peak $2j+2$ belongs to $X'$, then $t\mid b_{2j+1}+b_{2j+2}$,
\item if the current peak $2j$ does not belong to $X'$, then if the next peak $2j+2$ belongs to $X'$, then $t\nmid b_{2j+1}+b_{2j+2}$,
\item if the current peak $2j$ belongs to $X'$, then if the next peak $2j+2$ does not belong to $X'$, then $t\nmid b_{2j+1}+b_{2j+2}$,
\item if the current peak $2j$ does not belong to $X'$, then if the next peak $2j+2$ does not belong to $X'$, then $t\mid b_{2j+1}+b_{2j+2}$.
\end{enumerate}

Additionally, we assume that $t\mid b_{i_1}+b_{i_1+1}+b_{i_2}+b_{i_2+1}$ for every two consecutive odd indices $i_1$ and $i_2$ such that $t\nmid b_{i_l}+b_{i_{l+1}}$, $l=1,2$. Then the module $\mathbb{M}(I,J)$ is isomorphic to the direct sum $L_X\oplus L_Y$. 
\end{theorem}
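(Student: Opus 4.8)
The strategy is to construct an explicit non-trivial idempotent $\varphi\in\operatorname{End}(\MM(I,J))$ whose image and kernel are the rank $1$ submodules $L_X$ and $L_Y$, and to read off from the eigenvector data that these are exactly the rims $X$ and $Y$ determined by the subsets $X'$ and $Y'$. By Theorem~\ref{t6} and the hypotheses (which are precisely the conditions under which $\MM(I,J)$ is decomposable, with the ``consecutive sums'' condition ensuring the divisibility relations~(\ref{div0}) collapse to a single one), such an idempotent exists; the work is to identify which idempotent corresponds to the prescribed $X'$. First I would use Proposition~\ref{lm:n6-hom} to write down the general endomorphism $\varphi$ in terms of $a,b,c,d$ at the base vertex $j_r$, then specialize as in the proof of Theorem~\ref{t6} to the idempotent with $a=1$, $d=0$, $b=0$, and $c=t(b_{l_1}+b_{l_1+1})^{-1}$, where $i_{l_1}$ is the first odd index (starting from the chosen peak of $X'\setminus Y'$) with $t\nmid b_{i_{l_1}}+b_{i_{l_1}+1}$. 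One must check that among the finitely many such idempotents (one per choice of ``starting'' non-divisible index, equivalently per way of breaking the cyclic chain of non-divisible sums into matched pairs) the one just described is the one whose summands are $L_X$ and $L_Y$ with $X$ passing exactly through the peaks in $X'$.

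Next I would compute, vertex by vertex, the eigenvectors of $\varphi_i$ for eigenvalue $1$ (spanning the fiber of the summand $L_X$) and for eigenvalue $0$ (spanning the fiber of $L_Y$). Using the explicit formulas in Proposition~\ref{lm:n6-hom} for $\varphi_{i_\ell}$ and $\varphi_{j_\ell}$, the $1$-eigenvector at vertex $i_\ell$ or $j_\ell$ is a column vector whose entries are expressible through the partial sums $\sum_{g\le m}b_g$ and $t^{-1}c$; the key point is that $t\mid t^{-1}c$ fails exactly when the partial sum $\sum b_g$ has ``crossed'' an unmatched non-divisible index, which is exactly when the path defining $X$ switches between going through a peak of $I$ and skipping it. Then I would track the action of the structure maps $x_i=\begin{pmatrix} t & b_i\\ 0 & 1\end{pmatrix}$ (for odd $i$) and $x_i=\begin{pmatrix} 1 & b_i\\ 0 & t\end{pmatrix}$ (for even $i$) on these eigenvectors: after an appropriate $t$-power rescaling (using the convention $t^{-d}w$ introduced just before Proposition~\ref{lm:n6-hom}) the eigenvector bundle for eigenvalue $1$ becomes a rank $1$ lattice, and its rim is read off by recording at which edges the map acts by $1$ versus by $t$. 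Comparing this with Definition~\ref{d:moduleMI}, the rim obtained is precisely $X$: an edge $2j+1$ lies in $X$ iff, in the path language of the four cases before the theorem statement, the step out of peak $2j$ goes down, which by cases (1) and (3) is governed by whether $t\mid b_{2j+1}+b_{2j+2}$ together with whether $2j\in X'$. The complementary computation with the $0$-eigenvector gives $Y$.

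The main obstacle I expect is the bookkeeping in the eigenvector computation: one must verify that the $t$-adic valuations of the partial-sum expressions appearing in $\varphi_{i_\ell}$ and $\varphi_{j_\ell}$ behave exactly so that the rescaled $1$-eigenvector lattice is well-defined (i.e.\ the entries are in $\CC[|t|]$ after the minimal rescaling and the two entries are not both divisible by $t$ at any vertex, so the lattice really has rank $1$), and that the resulting half-integer vertical displacements match the combinatorial recipe for $X$ in all four cases simultaneously, including the wrap-around from vertex $n$ back to vertex $0$. A clean way to organize this is to do the computation first for a single ``block'' between two consecutive matched non-divisible indices $i_{l_g}, i_{l_{g+1}}$ — where $t\mid b_{i_{l_g}}+b_{i_{l_g}+1}+b_{i_{l_{g+1}}}+b_{i_{l_{g+1}}+1}$ forces the eigenvector to return to its original ``level'' — and then concatenate the blocks cyclically; within each block the four cases (1)--(4) are exactly the transition rules for whether the rim of $X$ (resp.\ $Y$) goes up or down at each of the two vertices passed. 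Finally I would invoke the rank additivity of the dimension lattice on the short exact sequence $0\to L_J\to L_X\oplus L_Y\to L_I\to 0$ as a consistency check that $X'$ and $Y'$ partition the peaks of $I$, confirming $\MM(I,J)\cong L_X\oplus L_Y$.
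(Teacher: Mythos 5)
Your proposal follows essentially the same route as the paper: starting from the non-trivial idempotent constructed in the proof of Theorem \ref{t6} (with $a=1$, $d=0$, $b=0$, $c$ built from the first non-divisible sum after the chosen starting peak), computing the eigenvalue-$1$ and eigenvalue-$0$ eigenvectors at every vertex via the partial sums $B_l=\sum_{g\le l}b_g$ and their $t$-divisibility, and then tracking whether each $x_i$ acts on these eigenvectors by $1$ or by $t$ in the four peak-to-peak cases to read off the rims $X$ and $Y$. This matches the paper's argument, including its block/parity bookkeeping for when $t\mid B_{2j}$, so the plan is sound and no genuinely different method is involved.
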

\begin{proof}
By Theorem \ref{t6}, the module is decomposable. We start our path at a peak from $X'\setminus Y'$. Assume without loss of generality that this peak is 0 and that the next peak 2 does not belong to $X'$. This means that $t\nmid b_{1}+b_{2}$. Define an idempotent $\varphi_{0}=\begin{pmatrix}
1& 0\\
-t(b_{1}+b_{2})^{-1}&0
\end{pmatrix}.$
 Its orthogonal complement is the idempotent
 $
 \tilde{\varphi}_{0}=\begin{pmatrix}
0& 0\\
t(b_{1}+b_{2})^{-1}&1
\end{pmatrix}.$  From (\ref{end}) we easily compute other idempotents $\varphi_i$.  If we denote by  $B_l$  the sum $\sum_{i=1}^lb_i,$ then  for odd indices we get
$$
\varphi_{2j+1}=\begin{pmatrix}1-B_{2j+1}(b_1+b_{2})^{-1} & -B_{2j+1}+B_{2j+1}^2(b_1+b_{2})^{-1} \\ -(b_1+b_{2})^{-1}& -B_{2j+1}(b_1+b_{2})^{-1}  \end{pmatrix},$$ 
$$\tilde{\varphi}_{2j+1}=\begin{pmatrix}B_{2j+1}(b_1+b_{2})^{-1} & B_{2j+1}(1-B_{2j+1}(b_1+b_{2})^{-1}) \\ (b_1+b_{2})^{-1}& 1-B_{2j+1}(b_1+b_{2})^{-1}  \end{pmatrix},
$$
and for even indices 
$$
\varphi_{2j}=\begin{pmatrix}1-B_{2j}(b_1+b_{2})^{-1} & -t^{-1}B_{2j}(1-B_{2j}(b_1+b_{2})^{-1}) \\ -t(b_1+b_{2})^{-1}& B_{2j}(b_1+b_{2})^{-1}  \end{pmatrix},$$ 
$$\tilde{\varphi}_{2j}=\begin{pmatrix}B_{2j}(b_1+b_{2})^{-1} & t^{-1}B_{2j}(1-B_{2j}(b_1+b_{2})^{-1}) \\ t(b_1+b_2)^{-1}& 1-B_{2j}(b_1+b_{2})^{-1}\end{pmatrix}.
$$

Let $v_i$ (resp.\ $w_i$) be the eigenvector of  $\varphi_{i}$ (resp.\ $\tilde{\varphi}_{i}$) corresponding to the eigenvalue $1$. The vectors $w_i$ (resp.\ $v_i$) form a basis for $L_X$ (resp.\ $L_Y$). We compute directly these eigenvectors. For an odd index we have
$$
w_{2j+1}=\begin{pmatrix} B_{2j+1}\\ 1
\end{pmatrix}, \quad \quad \quad v_{2j+1}=\begin{pmatrix} 1-B_{2j+1}(b_1+b_2)^{-1}\\ -(b_1+b_2)^{-1}
\end{pmatrix}. 
$$  
 
Since $t\mid b_{i_1}+b_{i_1+1}+b_{i_2}+b_{i_2+1}$, for every two consecutive odd indices $i_1$ and $i_2$ such that $t\nmid b_{i_l}+b_{i_l+1}$,  $l=1,2$, when computing $w_{2j}$ and $v_{2j}$ we have to distinguish between the following cases.  Let $g$ be the number of indices in the set $[1,j]$ such that $t\nmid b_{2j-1}+b_{2j}$. 
If $g$ is even (resp.\ odd), then $t\mid B_{2j}$ (resp.\ $t\nmid B_{2j}$).
Therefore, if $g$ is even, i.e., if $B_{2j}$ is divisible by $t$, then 
$$
w_{2j}=\begin{pmatrix} t^{-1}B_{2j}\\ 1
\end{pmatrix}, \quad \quad \quad v_{2j}=\begin{pmatrix} 1-B_{2j}(b_1+b_2)^{-1}\\ -t(b_1+b_2)^{-1}
\end{pmatrix}. 
$$  
If $g$ is odd, i.e., if $B_{2j}$ is not divisible by t (more precisely, $B_{2j}=b_1+b_2+tz$, for some $z$), then 
$$
w_{2j}=\begin{pmatrix} B_{2j}\\t  
\end{pmatrix}, \quad \quad \quad v_{2j}=\begin{pmatrix} t^{-1}(1-B_{2j}(b_1+b_2)^{-1})\\ -(b_1+b_2)^{-1}
\end{pmatrix}. 
$$  
Combinatorially, $g$ is even (resp.\ odd) if and only if we are positioned at a peak (resp.\ valley) $2j$ after $(2j)$th step. This follows from the fact that $t\nmid b_{2j-1}+b_{2j}$ means that we are moving either from a peak to a valley, or from a valley to a peak. Since we started from a peak, if we are currently at a peak $2j$, then this means that we had an even number of the moves that correspond to the sums $b_{2i-1}+b_{2i}$ that are not divisible by $t$.

Consider the eigenvectors  $v_0=[1\,\, ,\,\, -t(b_1+b_2)^{-1}]^t$, $w_0=[0\,\, ,\,\, 1]^t$ for $\varphi_0$ and its orthogonal complement. Then  $x_1w_0=w_1$ and $x_2w_1=w_2$, so $1,2\in X$. Also, $x_1v_0=tv_1$ and $x_2v_1=tv_2$, so $1,2\notin Y$.  We continue by moving to the right and consider the four cases from the statement of the theorem.

 Case 1:  If the current peak $2j$ belongs to $X'$, then if the next peak $2j+2$ belongs to $X'$, then $t\mid b_{2j+1}+b_{2j+2}$.  In this situation we are moving from a peak to another peak by going down and then up. Here, $t\mid B_{2j}$ and $t\mid B_{2j+2}$.  Since   $w_{2j}=\begin{pmatrix} t^{-1}B_{2j}\\1 \end{pmatrix},$ $w_{2j+1}=\begin{pmatrix} B_{2j+1}\\ 1
\end{pmatrix},$ and $w_{2j+2}=\begin{pmatrix} t^{-1}B_{2j+2}\\1 \end{pmatrix},$ it follows that $x_{2j+1}w_{2j}=w_{2j+1}$ and $x_{2j+2}w_{2j+1}=tw_{2j+2}$. Therefore, $2j+1\in X$ and $2j+2\notin X$. Analogously,  $x_{2j+1}v_{2j}=tv_{2j+1}$ and $x_{2j+2}v_{2j+1}=v_{2j+2}$. Therefore, $2j+1\notin Y$ and $2j+2\in Y$. 
 
Case 2: If the current peak $2j$ does not belong to $X'$, then if the next peak $2j+2$ belongs to $X'$, then $t\nmid b_{2j+1}+b_{2j+2}$ and we are moving from a valley to a peak. Here, $t\nmid B_{2j}$ and $t\mid B_{2j+2}$. In this case $w_{2j}=\begin{pmatrix} B_{2j}\\t \end{pmatrix},$ $w_{2j+1}=\begin{pmatrix} B_{2j+1}\\ 1\end{pmatrix},$ and $w_{2j+2}=\begin{pmatrix} t^{-1}B_{2j+2}\\1 \end{pmatrix}.$ It follows that $x_{2j+1}w_{2j}=tw_{2j+1}$ and $x_{2j+2}w_{2j+1}=tw_{2j+2}$. Therefore, $2j+1\notin X$ and $2j+2\notin X$. Analogously,  $x_{2j+1}v_{2j}=v_{2j+1}$ and $x_{2j+2}v_{2j+1}=v_{2j+2}$. Therefore, $2j+1\in Y$ and $2j+2\in Y$. 

Case 3:  If the current peak $2j$ belongs to $X'$, then if the next peak $2j+2$ does not belong to $X'$, then $t\nmid b_{2j+1}+b_{2j+2}$ and we move from a peak to a valley. Here, $t\mid B_{2j}$ and $t\nmid B_{2j+2}$. In this case $w_{2j}=\begin{pmatrix} t^{-1}B_{2j}\\1 \end{pmatrix},$ $w_{2j+1}=\begin{pmatrix} B_{2j+1}\\ 1\end{pmatrix},$ and $w_{2j+2}=\begin{pmatrix} B_{2j+2}\\t \end{pmatrix}.$ It follows that $x_{2j+1}w_{2j}=w_{2j+1}$ and $x_{2j+2}w_{2j+1}=w_{2j+2}$. Therefore, $2j+1\in X$ and $2j+2\in X$. Analogously,  $x_{2j+1}v_{2j}=tv_{2j+1}$ and $x_{2j+2}v_{2j+1}=tv_{2j+2}$. Therefore, $2j+1\notin Y$ and $2j+2\notin Y$. 

Case 4: If the current peak $2j$ does not belong to $X'$, then if the next peak $2j+2$ does not belong to $X'$, then $t\mid b_{2j+1}+b_{2j+2}$ and we move from a valley to a valley.  Here, $t\nmid B_{2j}$ and $t\nmid B_{2j+2}$. In this case $w_{2j}=\begin{pmatrix} B_{2j}\\t \end{pmatrix},$ $w_{2j+1}=\begin{pmatrix} B_{2j+1}\\ 1\end{pmatrix},$ and $w_{2j+2}=\begin{pmatrix} B_{2j+2}\\t \end{pmatrix}.$ It follows that $x_{2j+1}w_{2j}=tw_{2j+1}$ and $x_{2j+2}w_{2j+1}=w_{2j+2}$. Therefore, $2j+1\notin X$ and $2j+2\in X$. Analogously,  $x_{2j+1}v_{2j}=v_{2j+1}$ and $x_{2j+2}v_{2j+1}=tv_{2j+2}$. Therefore, $2j+1\in Y$ and $2j+2\notin Y$.  
\end{proof}
\begin{ex} Consider Figure (f) from Example \ref{48} (see Figure \ref{48}).
\begin{figure}[H]
\begin{center}
{\includegraphics[width = 8cm ]{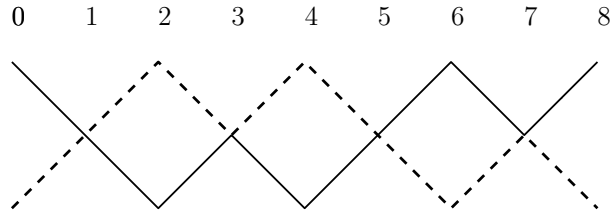}}\\
\caption{A pair of profiles for $L_{\{1, 2, 4, 7\}}\oplus L_{\{3,5, 6,8 \}}$}
\label{48}
\end{center}
\end{figure}
Here, $X=\{1,2,4,7\}$, $X'=\{0,6\}$, $Y=\{3,5,6,8\}$, and $Y'=\{2,4\}$.  Define $b_i$, $i=1,\dots, 8$, as follows. We start at the peak 0, and we travel to the right by going through two points at each step. We first reach valley 2 by going down and down. Here, $t\nmid b_1+b_2$ because of the third condition from the previous theorem. Then we reach valley 4 by going up and down. Here, $t\mid b_3+b_4$ because of the fourth condition from the previous theorem. Next, we reach peak 6 by going up and up. By the second condition from the previous theorem, it must be $t\nmid b_5+b_6$. Finally, we come back to the starting peak 0 by going down and then up. As stated in the first condition of the previous theorem, we have that $t\mid b_7+b_8$. Therefore, if $t\nmid b_1+b_2$, $t\mid b_3+b_4$, $t\nmid b_5+b_6$, and $t\mid b_7+b_8$, then the module $\mathbb M(I,J)$ is isomorphic to  $L_{\{1, 2, 4, 7\}}\oplus L_{\{3,5, 6,8 \}}$. Note that we also have to make sure that  $\sum_{i=1}^8b_i=0$. For example, we can set $b_1+b_2=-(b_5+b_6)=1$ and $b_3+b_4=-(b_7+b_8)=t$.
\end{ex}

\begin{rem}It is not too difficult to generalize Theorem \ref{paths}, by taking analogous paths in the lattice diagram, to the general case when the layers of the profile $I\mid J$ are $r_1$-interlacing for some $r_1\ge 3$, and the profile $I\mid J$ has $r$ boxes, with poset $1^r\mid 2$ (we refer the reader to \cite{BBL} for details on the notion of a box, the poset of a profile, and a branching point of a profile). In the next  example we demonstrate how the decomposable extensions look like for a rank 2 module in the  tame case  $(4,8)$. The path is analogous to the path in the tight interlacing case, at each branching point (a point where the rims meet) we have an option to either go up or down. This path uniquely determines the summands $L_X$ and $L_Y$.
\end{rem}

\begin{ex} \cite[Example 4.5]{BBL}
To construct decomposable modules with the profile $\{2,4,7,8\}\mid \{1,3,5,6\}$  we define $x_i=\begin{pmatrix} t& b_{i} \\ 0 & 1 \end{pmatrix}$, 
$y_i=\begin{pmatrix} 1& -b_{i} \\ 0 & t \end{pmatrix},$ for $i=2,4,7,8$ and 
$x_{i}=\begin{pmatrix} 1& b_{i} \\ 0 & t \end{pmatrix},$ $y_i=\begin{pmatrix} t& -b_{i} \\ 0 & 1 \end{pmatrix},$ for  $i=1,3,5,6$, and  assume that 
$b_1+b_2+b_3+b_4+b_5+b_8+t(b_6+b_7)=0.$ Denote this module again by $\MM(I,J)$. It is easily seen that $L_J$ is a summand of $\MM(I,J)$ if and only if $t\mid b_8+b_1$, $t\mid b_2+b_3$, and $t\mid b_4+b_5$. The module $\MM(I,J)$ is indecomposable if and only if  $t\nmid b_2+b_3$, $t\nmid b_4+b_5$, $t\nmid b_8+b_1$. If   $t\nmid b_2+b_3$, $t\mid b_4+b_5$, $t\nmid b_8+b_1$, then  $\MM(I,J)$ is isomorphic to $L_{\{2,3,5,6\}}\oplus L_{\{1,4,7,8\}}$. If   $t\mid b_2+b_3$, $t\nmid b_4+b_5$, $t\nmid b_8+b_1$, then  $\MM(I,J)$ is isomorphic to  $L_{\{2,4,5,6\}}\oplus L_{\{1,3,7,8\}}$.  If  $t\nmid b_2+b_3$, $t\nmid b_4+b_5$, $t\mid b_8+b_1$, then  $\MM(I,J)$ is isomorpic to  $L_{\{2,3,7,8\}}\oplus L_{\{1,4,5,6\}}$.

Thus, there are four different decomposable modules appearing as the middle term in a short exact sequence that has  $L_I$ (as a quotient) and $L_J$ (as a submodule) as end terms. The {pairs of profiles} of the four modules that appear in the middle in these short exact sequences can be pictured as follows.
\begin{figure}[H]
\begin{center}
\subfloat[$L_{\{1, 3, 5,6\}}\oplus L_{\{2,4, 7,8\}}$]{\includegraphics[width = 6cm]{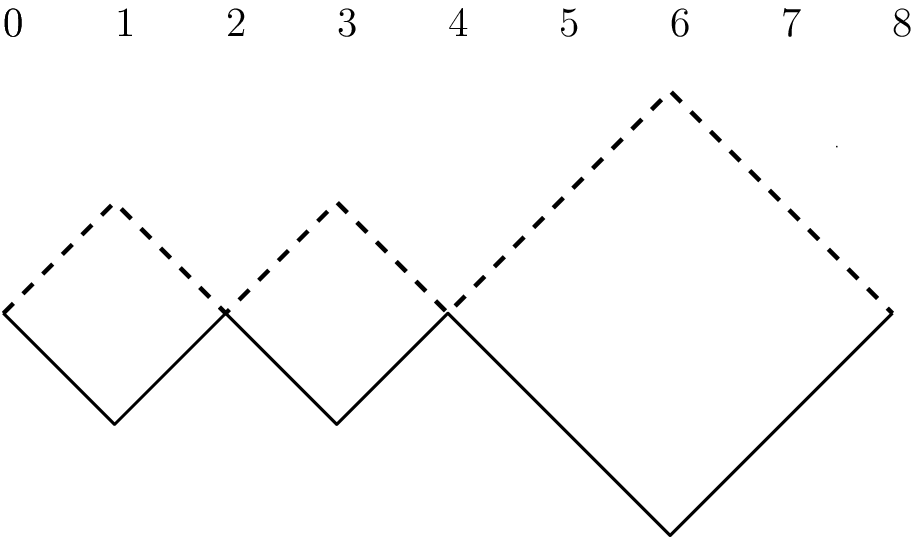}}  \quad \quad \quad \quad
\subfloat[$L_{\{1, 4, 7,8\}}\oplus L_{\{2,3,5, 6\}}$]{\includegraphics[width = 6cm ]{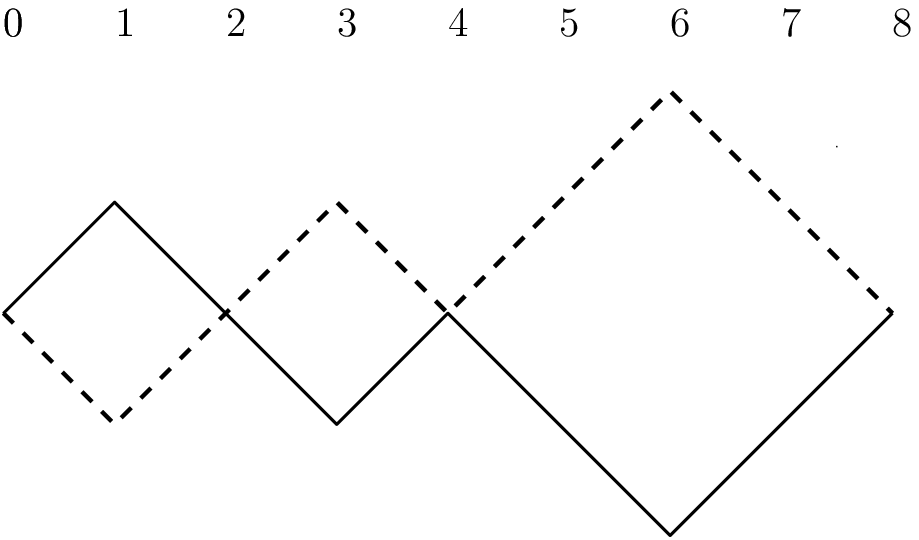}}\\
\subfloat[$L_{\{1, 3, 7,8\}}\oplus L_{\{2,4, 5, 6\}}$]{\includegraphics[width = 6cm ]{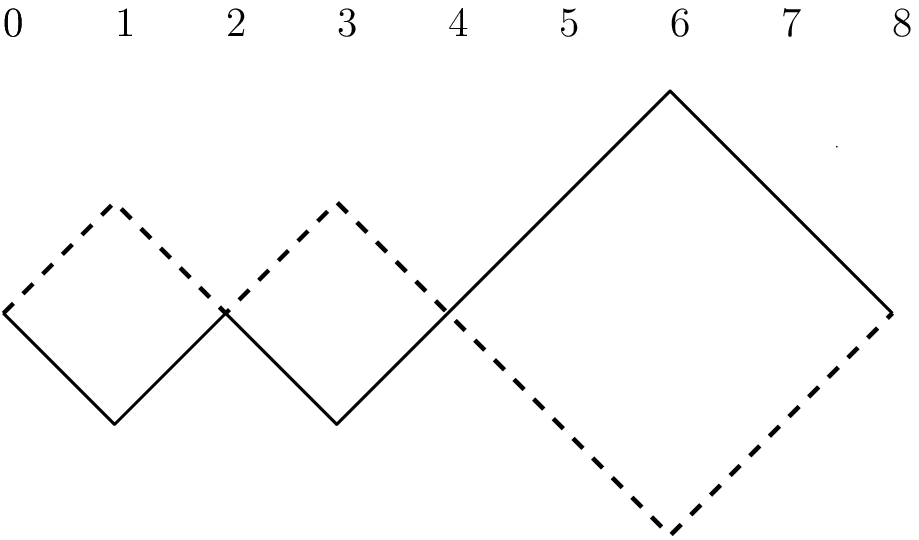}}  \quad \quad \quad \quad
\subfloat[$L_{\{2,3,7,8\}} \oplus L_{\{1, 4, 5,6\}} $]{\includegraphics[width = 6cm ]{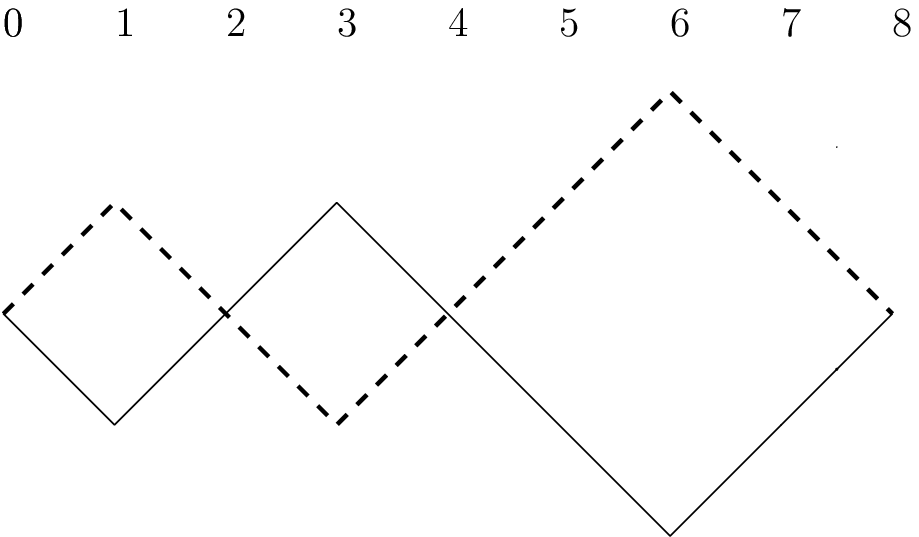}} 
\caption{{The pairs of} profiles of decomposable extensions between $ L_{\{1, 3, 5,6\}}$ and $L_{\{2,4, 7,8\}}$.}
\end{center}
\end{figure}
\end{ex}



\end{document}